\newcounter{author}
\renewcommand*\author[1]{%
  \stepcounter{author}%
  \ifnum\c@author=1
    \gdef\@author{#1}%
  \else
    \xdef\@author{\unexpanded\expandafter{\@author\and#1}}%
  \fi
  \csgdef{author@\the\c@author}{#1}}
\newcommand*\email[1]{%
  \csgdef{email@\the\c@author}{#1}}
\newcommand*\address[1]{%
  \csgdef{address@\the\c@author}{#1}}
  \xdef\author@count{\the\c@author}%
\newcommand*\print@authors{%
  \ifnum\c@author>\author@count
  \else
    \print@author{\the\c@author}%
    \advance\c@author by 1
    \expandafter\print@authors
  \fi}
\newcommand*\print@author[1]{%
  \par\medskip
  \begin{tabular}{@{}l@{}}%
    \textsc{Addresses of \csuse{author@#1}}\\
    \csuse{address@#1}\\
    \textit{E-mail address}:
    \href{mailto:\csuse{email@#1}}{\csuse{email@#1}}
  \end{tabular}}
\author{Dongchen Jiao}
\address{Department of Mathematics, Brunel Univeristy London UB8 3PH}
\email{dongchen.jiao@brunel.ac.uk}
\author{Pascale Voegtli}
\address{Departement of Mathematics, University College London,WC1E 6BT}
\email{pascale.voegtli.20@ucl.ac.uk}
\newtheorem{theorem}{Theorem}[section]
\newtheorem{proposition}[theorem]{Proposition}
\newtheorem{definition}[theorem]{Definition}
\newtheorem{lemma}[theorem]{Lemma}
\newtheorem*{remark}{Remark}
\title{Flop connections between minimal models for corank 1 foliations over threefolds}
\begin{document}
\maketitle
\begin{abstract}
In 2007 Kawamata proved that two different minimal models can be connected by a sequence of flops. The aim of this paper is to show that the same holds true for 2 foliated minimal models descending from a common  3-fold pair equipped with a F-dlt foliation of corank 1.
\end{abstract}
\section{Introduction}
In recent years the understanding of the fundamental birational geometry of foliations, especially on 3-folds, has been promoted by several groundbreaking works. In \cite{cork1MMP} most parts of the classical MMP have been extended to 3-fold pairs equipped with a mildly singular corank 1 foliation. In particular, the existence of log-flips has been established.\par
The successful establishment of a foliated analogue of the classical MMP in low dimensions naturally raises the question whether classical results being closely related to the MMP do find their natural generalizations for foliated pairs.\par
One such classical result one might strive to convey to foliations is the well-known theorem of Kawamata \cite{Kawamata}, stating that two minimal models with terminal singularities are related by a sequence of flops.\par
Building upon the findings of the authors in \cite{cork1MMP}, the aim of this paper is to generalize the results of \cite{Kawamata} to foliated 3-folds equipped with a corank 1 foliation with F-dlt singularities.\par
In the proof of the main theorem (see below), we closely follow the line of reasoning found in \cite{Kawamata} and deviate only where adaptations or modifications seem inevitable.\par
Concretely, in the present publication, the following theorem is proven (For the precise definitions we refer to section (\ref{preliminaries})):
\begin{theorem}\label{theorem}
Let $(Y_{1}, \mathcal{F}_1)$ and $(Y_{2}, \mathcal{F}_2)$ be two foliated minimal models descending from a common foliated 3-fold pair $(X,\mathcal{F})$. We assume that $X$ is klt and $\mathbb{Q}$-factorial.\par
Assume there is a birational map $\alpha: Y_1 \dashrightarrow Y_2$, such that $K_{\mathcal{F}_1}$ and $K_{\mathcal{F}_2}$ are big. 
Then $\alpha$ is composed of a sequence of flops.\par
\end{theorem}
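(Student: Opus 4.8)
Both $Y_1$ and $Y_2$ are $\mathbb{Q}$-factorial and carry F-dlt corank-one foliations, being obtained from $(X,\mathcal{F})$ by a $K_{\mathcal{F}}$-MMP. The plan is to follow the proof of \cite{Kawamata} almost verbatim, with the canonical class of the foliation in place of the canonical class of the variety and with the corank-one foliated MMP of \cite{cork1MMP} in place of the classical one. The first step is to see that $\alpha$ is an isomorphism in codimension one: on a common resolution $p\colon W\to Y_1$, $q\colon W\to Y_2$ of $\alpha$ equipped with the pulled-back foliation, the nefness of $K_{\mathcal{F}_1}$ and $K_{\mathcal{F}_2}$ together with the foliated negativity lemma force $p^{*}K_{\mathcal{F}_1}=q^{*}K_{\mathcal{F}_2}$, exactly as classically; hence $\alpha$ contracts no divisor, $Y_1$ and $Y_2$ agree in codimension one, $\rho(Y_1)=\rho(Y_2)$, and the strict transform on $Y_1$ of a $\mathbb{Q}$-divisor on $Y_2$ is well defined.

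The bigness hypothesis enters next. Since $K_{\mathcal{F}_1}$ is nef and big, foliated abundance (the base-point-free theorem in this setting) makes it semi-ample and produces the foliated canonical model $\phi_1\colon Y_1\to Z$, a birational morphism with $K_{\mathcal{F}_1}=\phi_1^{*}A$ for an ample $\mathbb{Q}$-divisor $A$ on $Z$; similarly $\phi_2\colon Y_2\to Z'$, and $p^{*}K_{\mathcal{F}_1}=q^{*}K_{\mathcal{F}_2}$ identifies the two section rings, so $Z=Z'$ and $\phi_2\circ\alpha=\phi_1$. In particular, on every model produced by an MMP over $Z$ the strict transform of $K_{\mathcal{F}_1}$ is still the pull-back of $A$, hence is numerically trivial on each curve contracted to $Z$.

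Now the core of the argument. Fix an ample $\mathbb{Q}$-divisor $H_2$ on $Y_2$, let $H_1$ be its strict transform on $Y_1$ (a big $\mathbb{Q}$-divisor), and for $0<\varepsilon\ll 1$ run the relative $(K_{\mathcal{F}_1}+\varepsilon H_1)$-MMP over $Z$ with scaling of an ample divisor, the pair $(\mathcal{F}_1,\varepsilon H_1)$ staying F-dlt. Any curve $C$ flipped or contracted by a step lies in a fibre over $Z$, so the strict transform of $K_{\mathcal{F}_1}$ is trivial on $C$ and the condition $(K_{\mathcal{F}_1}+\varepsilon H_1)\cdot C<0$ reads $H_1\cdot C<0$; thus every elementary step is $K_{\mathcal{F}}$-trivial. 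As the boundary is big over $Z$, this MMP terminates at a relative minimal model $Y'\to Z$, which maps by a birational morphism onto the relative ample model of $(\mathcal{F}_1,\varepsilon H_1)$ over $Z$; since $K_{\mathcal{F}_2}$ is trivial over $Z$, the divisor $K_{\mathcal{F}_2}+\varepsilon H_2$ is ample over $Z$, and — $H_1$ having been defined through $\alpha$ — this relative ample model is $Y_2$, with contracting map $\alpha$. Hence there is a birational morphism $g\colon Y'\to Y_2$ with $\alpha=g\circ(Y_1\dashrightarrow Y')$. Finally $\rho(Y')\le\rho(Y_1)=\rho(Y_2)$ while $g$, being a surjective birational morphism, forces $\rho(Y')\ge\rho(Y_2)$; so $\rho(Y')=\rho(Y_2)$, whence $g$ is small and, $Y_2$ being $\mathbb{Q}$-factorial, an isomorphism, and for the same numerical reason the MMP $Y_1\dashrightarrow Y'$ involves no divisorial contraction. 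Therefore $\alpha$ is a composition of $K_{\mathcal{F}}$-trivial flips, that is, of flops.

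The hard part is securing the foliated analogues of the classical inputs in the F-dlt corank-one setting — the negativity lemma; semi-ampleness of the big nef divisor $K_{\mathcal{F}_1}$, equivalently existence of the foliated canonical model $Z$ (the relevant case of foliated abundance); and termination with scaling, together with existence of the relative ample model, for the relative $(K_{\mathcal{F}_1}+\varepsilon H_1)$-MMP over $Z$ — and then checking that the foliations induced on the intermediate models and their canonical divisors transform under the isomorphisms in codimension one exactly as in \cite{Kawamata}, so that each flip over $Z$ is genuinely a flop for $\mathcal{F}$.
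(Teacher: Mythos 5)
Your overall architecture matches the paper's (and Kawamata's): show $\alpha$ is small via the negativity lemma, perturb by the strict transform of an ample divisor from $Y_2$, run the resulting MMP, and observe that each step is $K_{\mathcal{F}}$-trivial, hence a flop. But you have assumed away exactly the point that constitutes the core of the paper: your parenthetical claim that ``the pair $(\mathcal{F}_1,\varepsilon H_1)$ stay[s] F-dlt'' is not automatic, and establishing it is where essentially all the work lies. The divisor $H_1$ is only big on $Y_1$, not ample, so the Bertini-type statement (Theorem \ref{Bertini}, i.e.\ [\cite{cork1MMP}, Lemma 3.24]) does not apply to it; a general choice of $H_2$ on $Y_2$ gives no control over how its strict transform meets the indeterminacy locus of $\alpha$, and without F-dlt-ness of $(\mathcal{F}_1,\varepsilon H_1)$ you cannot invoke any of the corank-one MMP machinery (existence and termination of flips, base-point-freeness) that the rest of your argument relies on. The paper's intermediate results exist precisely to repair this: one proves that every curve $C\subset\mathrm{Exc}(\alpha)$ is $K_{\mathcal{F}_1}$-trivial, that no such curve is a log canonical centre of $\mathcal{F}_1$, that such curves are tangent to the foliation, and finally that $\mathcal{F}_1$ is terminal along $\mathrm{Exc}(\alpha)$ --- only then can the discrepancy conditions in the proof of the Bertini lemma be checked by hand for the non-ample divisor $L=\alpha_*^{-1}L'$. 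Your closing paragraph lists several inputs still to be ``secured'' but omits this one, which is the actual theorem-specific difficulty.

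A second, smaller gap: you construct the canonical model $Z$ by applying ``foliated abundance'' to the nef and big divisor $K_{\mathcal{F}_1}$ alone. The base-point-free theorem available in this setting (Theorem \ref{bpf}, i.e.\ [\cite{cork1MMP}, Theorem 9.4]) requires the boundary to contain an ample summand, so it does not yield semi-ampleness of $K_{\mathcal{F}_1}$ itself, and hence does not produce $Z$. The paper avoids $Z$ altogether: the $K_{\mathcal{F}_1}$-triviality of the contracted curves is obtained directly from the identification $\mathrm{Exc}(\alpha)=\mathbf{B}_+(K_{\mathcal{F}_1}+\epsilon L)$ together with the fact that a curve with $K_{\mathcal{F}_1}\cdot C>0$ cannot lie in that augmented base locus. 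Your relative-MMP-over-$Z$ packaging is an appealing alternative in principle, but as written it rests on an abundance statement that is not among the available tools.
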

In detail, there are an effective $\mathbb{Q}$-divisor A on $Y_1$ such that $(\mathcal{F}_1,A)$ is F-dlt and a factorization
\begin{equation}
(Y_1,\mathcal{F}_1,A)=(X_0,\mathcal{G}_0,A_0)\dashrightarrow (X_1,\mathcal{G}_1,A_1)\dashrightarrow...\dashrightarrow (X_r,\mathcal{G}_r,A_r)=(Y_2,\mathcal{F}_2,A^{'})
\end{equation}
satisfying
\begin{enumerate}
\item $\beta_i: X_{i-1}\dashrightarrow X_{i}$ is a flip associated to a $(K_{\mathcal{G}_{i-1}}+A_{i-1})$-negative extremal ray, where $A_{i}$ is the strict transform of $A$ on $X_i$.
\item $\beta_i$ is crepant for $K_{\mathcal{G}_{i-1}}$ in the sense that pull-backs of $K_{\mathcal{G}_{i-1}}$ and $K_{\mathcal{G}_{i}}$ coincide on a common resolution.
\item the flipping contractions $\beta_i$ are $K_{\mathcal{G}_i}$-trivial.
\end{enumerate}
\section*{Acknowledgements}
We would like to express our deep gratitude to the PhD-advisor of the second author, Paolo Cascini, for the initialization of the project and his indispensable guidance throughout its further evolvement. Furthermore, are both authors indebted to Calum Spicer for his extended explanations, invaluable suggestions and careful revision of the first draft of this publication.\par
The first author is supported by EPSRC DTP studentship (EP/V520196/1) Brunel University London and would like to show appreciation to his supervisor Anne-Sophie Kaloghiros for useful discussion and invaluable suggestions.\par
The second author would like to thank the LSGNT for laying the mathematical foundations of this work by providing a stimulating and supporting working environment.
The second author was supported by the Engineering and Physical Sciences Research Council [EP/S021590/1]. 
The EPSRC Centre for Doctoral Training in Geometry and Number Theory (The London School of Geometry and Number Theory), University College London.
\section{Preliminaries}\label{preliminaries}
\subsection{Basic definitions}
Throughout, we work over the complex numbers and by 3-fold or variety we mean a complex analytic space of dimension 3 if not specified otherwise.\par
\begin{definition}
Let $X$ be a normal variety. A foliation $\mathcal{F}$ on $X$ is a coherent subsheaf of $\mathcal{T}_X$, such that
\begin{itemize}
    \item $\mathcal{F}$ is saturated, i.e. $\mathcal{T}_X/\mathcal{F}$ is torsion free and
    \item $\mathcal{F}$ is closed under Lie bracket.
\end{itemize}
\end{definition}
\begin{remark}
$\mathcal{F}$ is always locally free on the smooth locus $X_0$ of $X$.
\end{remark}
\begin{definition}
Suppose $\mathcal{F}$ is a rank $r$ foliation on a normal variety $X$. Notice that there exists an open embedding $j:X_0\hookrightarrow X$ such that $X_0$ is smooth,  $\mathrm{codim}(X\backslash X_0)\geq2$ and $\mathcal{F}$ is locally free on $X_0$, so $\wedge^r\mathcal{F}$ is an invertible sheaf on $X_0$. We define the canonical divisor of $\mathcal{F}$ to be any divisor $K_{\mathcal{F}}$ on $X$ such that $\mathcal{O}_X(-K_{\mathcal{F}})\cong j_*(\wedge^r\mathcal{F})$.
\end{definition}

\begin{definition}
By the notation $(X,\mathcal{F})$ we mean a variety $X$ equipped with a corank 1 foliation $\mathcal{F}$. 
\end{definition}
\begin{definition}
A triple $(X,D,\mathcal{F})$ consists of a foliation $\mathcal{F}$ on a variety X and an effective $\mathbb{R}$-divisor D such that $K_{\mathcal{F}} + D$ is $\mathbb{R}$-Cartier.
\end{definition}
For the definition of foliated minimal model we adopt the one found in [\cite{cork1MMP}, section 10]. For reader's convenience, we restate it here:\par
\begin{definition}
A minimal model of $\mathcal{F}$ is a $K_{\mathcal{F}}$-negative birational map $f: X \dashrightarrow X^\prime$
such that if $\mathcal{F}^\prime$ is the transformed foliation on $X^\prime$, then
\begin{enumerate}
\item $X^\prime$ is $\mathbb{Q}$-factorial and klt  
\item $\mathcal{F^\prime}$ is F-dlt and $K_{\mathcal{F}^\prime}$ is nef.
\end{enumerate}
\end{definition}
We now recall the definitions of foliation singularities as well as the notions of invariance, tangency rsp. transversality of subvarieties:
\begin{definition}
Let X be a normal variety and $\mathcal{F}$ a rank r foliation on X. A subvariety $S \subset X$ is called $\mathcal{F}$-\textbf{invariant} if for any open subset $U \subset X$ and any section $\partial \in H^{0}(U,\mathcal{F})$ we have:
\begin{equation}
\partial(I_{S \cap U}) \subset I_{S\cap U}
\end{equation}
where $I_{S\cap U}$ denotes the ideal sheaf of $S \cap U$ in U.\\
If $\Delta \subset X$ is a prime divisor, one defines $\epsilon(\Delta)=-1$ if $\Delta$ is invariant in the above sense and  $\epsilon(\Delta)=0$ otherwise.  
\end{definition}
\begin{definition}
Let X be a normal projective variety equipped with a non-dicritical corank 1 foliation $\mathcal{F}$.\par 
We call a subvariety $W \subset X$ \textbf{tangent} to $\mathcal{F}$ if for any birational morphism ${\pi: \tilde{X} \to X}$ and any divisor E on X such that $E$ dominates $W$, we have that E is $\mathcal{\tilde{F}}$-invariant, where $\mathcal{\tilde{F}}$ denotes the pulled back foliation on $\tilde{X}$.\par
Otherwise, we call $W\subset X$ transverse to $\mathcal{F}$.
\end{definition}
\begin{definition}
For a birational morphism $\pi: \tilde{X} \to X$ and a foliated pair $(\mathcal{F},\Delta)$ on $X$, let $\tilde{\mathcal{F}}$ be the pulled back foliation on $\tilde{X}$ and $\tilde{\Delta}$ the strict transform of $\Delta $ on $\tilde{X}$. We then write:
\begin{equation}
K_{\mathcal{\tilde{F}}} + \tilde{\Delta} = \pi^{*}(K_{\mathcal{F}} + \Delta) + \Sigma a(E,\mathcal{F},\Delta)E
\end{equation}
where $\pi_{*}K_{\mathcal{\tilde{F}}}= K_{\mathcal{F}}$ and the sum runs over all prime exceptional divisors on $\tilde{X}$. The rational numbers $a(E,\mathcal{F},\Delta)$ are called discrepancies.\par
Given a normal variety X and a foliated pair $(\mathcal{F},\Delta)$ on X we call $(\mathcal{F},\Delta)$ terminal (rsp. canonical, log-canonical) if $a(E,\mathcal{F},\Delta) > 0$ (rsp.$\geq0,\geq -\epsilon(E)$) for every exceptional prime divisors E.\par
For a not necessarily closed point $P \in X$ we say $(\mathcal{F},\Delta)$ is terminal (rsp. canonical, log-canonical) at P if for all birational morphisms $\pi: \tilde{X}\to X$ and any $\pi$-exceptional divisor E on $\tilde{X}$ whose center lies in the Zariski closure $\bar{P}$ of P we have that the discrepancy of E is $>0 (\geq 0, rsp. \geq - \epsilon(E))$.\par
\end{definition}
There are yet another two important concepts related to singularities that will be needed in the following: For reader's convenience we recall here the definition of F-dlt-singularities and the notion of log-smoothness.\par
\begin{definition}
Let X be a normal variety and $\mathcal{F}$ a corank 1 foliation on X. Let A be a $\mathbb{Q}$-divisor such that $(K_{\mathcal{F}}+A)$ is $\mathbb{Q}$-Cartier.\par
We call $(\mathcal{F},A)$ foliated divisorial log terminal (F-dlt) if
\begin{enumerate}
\item Each irreducible component of A is generically transverse to the foliation $\mathcal{F}$ and has coefficients at most one
\item There is a foliated log resolution $\pi: Y \rightarrow X$ of $(\mathcal{F},A)$ which only extracts divisors E of log discrepancy $a(E,\mathcal{F},A)>-\epsilon(E)$.
\end{enumerate} 
\end{definition}
\begin{definition}
Given a germ $p\in X$ with a foliation $\mathcal{F}$ such that $p$ is a singular point of $\mathcal{F}$ we call a (formal) hypersurface germ $p\in S$ a (formal) separatrix if it is invariant under $\mathcal{F}$.
\end{definition}
\begin{definition}
Given $(X,\mathcal{F})$ as in the previous definition, we say that $(\mathcal{F},A)$ is foliated log smooth if:
\begin{enumerate}
\item $(X,A)$ is log smooth
\item $\mathcal{F}$ has simple singularities
\item If S denotes the support of non-$\mathcal{F}$-invariant components of A, $p \in S$ is a closed point and $\Sigma_1,...,\Sigma_k$ are $\mathcal{F}$-invariant divisors passing through p, then $\Sigma_1 \cup...\cup\Sigma_k$ is a normal crossings divisor at p.
\end{enumerate}
\end{definition}
For the precise definition of simple singularities, the inaugurated reader may again wish to consult [\cite{HDFM}, Def. 2.13].
Next, we clarify the notions of log flips and flops:
We state them in classical terms, i.e in the way they arise in the realm of the classical MMP. The respective generalizations to the foliated situation are though straight forward ( simply replace  $K_X$ by $K_{\mathcal{F}}$).
The definitions are verbatim taken from [\cite{km}, Def. 3.33 \textnormal{rsp} 6.10]
\begin{definition}
Let X be a normal scheme and D a $\mathbb{Q}$-divisor on X such that $K_X + D $ is $\mathbb{Q}$-Cartier.\\
A $(K_X + D)$-flipping contraction is a proper birational morphism $f: X \rightarrow Y$ to a normal scheme Y such that $Exc(f)$ has codimension at least two in X and -$(K_X + D)$ is f-ample.
A normal scheme $X^{+}$ together with a proper birational morphism $f^{+}: X^{+} \rightarrow Y$ is called a $(K_X + D)$-flip of f if:
\begin{enumerate}
\item  $K_{X^{+}} + D^{+}$ is $\mathbb{Q}$-Cartier, where $D^{+}$ is the birational transform of D on $X^{+}$
\item $K_{X^{+}} + D^{+}$ is $f^{+}$-ample
\item $Exc(f^{+})$ has codimension at least two in $X^{+}$
\end{enumerate}
\end{definition}
\begin{remark}
By an abuse of notation we also call the map $\phi: X \dashrightarrow X^{+}$ a $(K_X+D)$-flip or simply a D-flip. In a analogous way we can define a $K_{\mathcal{F}}+D$-flip for a foliation $\mathcal{F}$ on $X$.
\end{remark}
\begin{definition}
Let X be a normal scheme with klt singularity. Let $\mathcal{F}$ be a F-dlt foliation on $X$. A flopping contraction is a proper birational morphism $f: X \rightarrow Y$ to a normal scheme Y such that $Exc(f)$ has codimension at least two in X and $K_\mathcal{F}$ is numerically trivial.
In the above setup, i.e assuming $K_\mathcal{F}$ numerically trivial: if D is a $\mathbb{Q}$-Cartier divisor on X such that $-(K_\mathcal{F} +D)$ is f-ample, then the D-flip of f is also called the D-flop.\par
We recall that a divisor F on X is called numerically f-trivial for the birational contraction morphism $f: X \rightarrow Y$ if for every curve $C$ contracted by $f$, we have $F\cdot C=0$ .
\end{definition}
In the sequel of the proof of theorem \ref{theorem} we will make use of the following two results from \cite{cork1MMP}. We restate them here verbatim but refer to the original publication for a proof of the statements:
\begin{theorem} [\cite{cork1MMP},Theorem 9.4] \label{bpf}
Let X be a normal projective 3-fold with klt singularities. Let $\mathcal{F}$ be a corank 1 foliation on X. Let $\Delta$ be a $\mathbb{Q}$-divisor such that $(\mathcal{F},\Delta)$ is a F-dlt pair. Let $A \geq0$ and $B \geq 0$ be $\mathbb{Q}$-divisors such that $\Delta=A+B$ and A ample. Assume $K_\mathcal{F} +\Delta$ is nef.\par
Then $K_\mathcal{F} +\Delta$ is semi-ample. 
\end{theorem}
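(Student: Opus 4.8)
The statement is a foliated Kawamata--Shokurov base-point-free theorem, and the first remark is that the bigness hypothesis needed in such theorems is here automatic: writing $K_{\mathcal F}+\Delta=(K_{\mathcal F}+B)+A$ with $A$ ample shows that $K_{\mathcal F}+\Delta$ is nef \emph{and big}. So what has to be proven is that a nef and big $\mathbb Q$-divisor of the shape $K_{\mathcal F}+\Delta$, with $(\mathcal F,\Delta)$ F-dlt and an ample summand inside $\Delta$, is semi-ample. The plan is to run Kawamata's $X$-method, substituting $K_{\mathcal F}$ for $K_X$ throughout and routing the two cohomological-vanishing inputs of the classical argument through the comparison between $K_{\mathcal F}$ and $K_X$. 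As usual, it is enough to show that $|m(K_{\mathcal F}+\Delta)|$ is base-point free for a single sufficiently divisible $m$, and this reduces to showing that an arbitrarily prescribed point $x\in X$ is not a base point. Moreover, since $K_{\mathcal F}+\Delta$ is big, $h^{0}(X,\mathcal O_X(m(K_{\mathcal F}+\Delta)))$ grows like $m^{3}$, so the foliated non-vanishing step costs nothing: the class contains many effective divisors.

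Fix $x\in X$. Passing to a foliated log resolution and using the ample summand $A$ for tie-breaking, I would perturb $\Delta$ to an effective $\mathbb Q$-divisor $\Delta'\sim_{\mathbb Q}\Delta$ with $K_{\mathcal F}+\Delta'$ still nef and big, and such that $(\mathcal F,\Delta')$ is log canonical with a single minimal log-canonical centre $W$ through $x$ (so that $(\mathcal F,\Delta')$ fails to be log terminal only along $W$); the ample part of $\Delta$ is exactly what makes this rearrangement possible while keeping positivity. If $W$ is zero- or one-dimensional, nefness together with the bigness inherited from $A$ forces $(K_{\mathcal F}+\Delta')|_W$ to be ample, hence base-point free. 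If $W$ is a surface, I would apply foliated adjunction: $(K_{\mathcal F}+\Delta')|_W=K_{\mathcal F_W}+\Delta_W$, where $\mathcal F_W$ is the restricted foliation --- the trivial foliation $T_W$ when $W$ is $\mathcal F$-invariant, a foliation by curves when $W$ is transverse --- and $\Delta_W\ge 0$ records the different together with the tangency divisor of $\mathcal F$ along $W$. Then $(W,\mathcal F_W,\Delta_W)$ is an F-dlt pair of strictly smaller dimension with $\Delta_W$ still carrying an ample piece from $A|_W$, so by the two-dimensional case (which in the invariant case is the classical base-point-free theorem on the surface $W$) the restriction $(K_{\mathcal F}+\Delta')|_W$ is semi-ample. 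In every case, $m(K_{\mathcal F}+\Delta')|_W$ is base-point free for $m$ large and divisible.

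It remains to lift a section non-vanishing at $x$ from $W$ to $X$, i.e.\ to prove that $H^{0}(X,\mathcal O_X(m(K_{\mathcal F}+\Delta')))\to H^{0}(W,\mathcal O_W(m(K_{\mathcal F}+\Delta')|_W))$ is surjective for $m\gg 0$, and this is where I expect the real work to lie. Classically one invokes Kawamata--Viehweg vanishing for $\lceil m(K_{\mathcal F}+\Delta')-W\rceil$ minus a klt boundary, but there is no ready-made Kawamata--Viehweg statement attached to $K_{\mathcal F}$. The way around is to translate the required vanishing into a genuine one on the klt variety $X$: using that $K_{\mathcal F}-K_X$ is the class of the normal line bundle of $\mathcal F$, and that for a non-dicritical $F$-dlt foliation the log-canonical/F-dlt condition for $(\mathcal F,\Delta')$ upgrades to a log-canonical (indeed klt, away from $W$) condition for a suitable pair $(X,\Theta)$ obtained by adding the separatrices through the relevant centres and the tangency corrections produced by adjunction, one replaces the foliated vanishing by Kawamata--Viehweg vanishing for an honest nef and big $\mathbb Q$-divisor on $X$. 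Checking that, after this translation, the rounding still leaves a nef and big class, and keeping consistent track of the invariant divisors/separatrices and the tangency terms, is the technical heart of the argument. Granting it, the lifted section is non-zero at $x$, so $x\notin\mathrm{Bs}\,|m(K_{\mathcal F}+\Delta)|$; as $x$ was arbitrary, $|m(K_{\mathcal F}+\Delta)|$ is base-point free, i.e.\ $K_{\mathcal F}+\Delta$ is semi-ample.
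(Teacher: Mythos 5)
This statement is not proved in the paper at all: it is background, quoted verbatim from [\cite{cork1MMP}, Theorem 9.4], with the reader referred to that publication for the proof. So your proposal should be judged as an attempt to reprove a deep external input, and as it stands it has genuine gaps. First, your opening reduction is false: from $K_{\mathcal F}+\Delta=(K_{\mathcal F}+B)+A$ with $A$ ample you cannot conclude bigness, because $K_{\mathcal F}+B$ is not assumed pseudo-effective; nothing in the hypotheses prevents $K_{\mathcal F}+\Delta$ from being, say, numerically trivial, and in that case your whole strategy (which leans on bigness both for non-vanishing and for the positivity needed in the vanishing step) collapses, while the theorem still asserts semi-ampleness. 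Relatedly, for a one-dimensional minimal lc centre $W$ the claim that nefness plus bigness forces $(K_{\mathcal F}+\Delta')|_W$ to be ample is also wrong: $W$ may lie in the null locus of a nef and big class.

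Second, the step you explicitly defer (``Granting it\dots'') is precisely the fundamental obstruction, not a technical verification. There is no Kawamata--Viehweg-type vanishing for $K_{\mathcal F}$ --- vanishing already fails for rank one foliations on surfaces --- and the proposed translation to an honest klt pair $(X,\Theta)$ via $K_{\mathcal F}-K_X$ shifts the class by the (co)normal bundle of $\mathcal F$, which carries no positivity in general; the ``bookkeeping'' you postpone is exactly where the X-method breaks. This is why the actual proof of Theorem 9.4 in \cite{cork1MMP} does not run the classical base-point-free argument with $K_X$ replaced by $K_{\mathcal F}$, but instead goes through the machinery developed there (foliated adjunction to invariant divisors and lc centres, the structure theory of F-dlt singularities, and semi-ampleness arguments tailored to avoid vanishing theorems). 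In short: the skeleton of Kawamata's proof cannot simply be transplanted, and your sketch both assumes an unavailable bigness and leaves the one genuinely hard ingredient unproven.
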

\begin{theorem}[\cite{cork1MMP},Lemma 3.24] \label{Bertini}
Let X be a normal projective 3-fold, $\mathcal{F}$ a corank1 foliation on X . Let $(\mathcal{F},\Delta)$ be an F-dlt pair such that $\lfloor \Delta \rfloor =0$  and let A be an ample $\mathbb{Q}$-divisor on X.\par
Then there is an effective $\mathbb{Q}$-divisor $A^{'} \sim_{Q}A$ such that:
\begin{enumerate}
\item $(\mathcal{F},\Delta)$ is also F-dlt
\item $\lfloor \Delta + A^\prime \rfloor =0$
\item The support of $A^\prime$ does not contain any log canonical center of $(\mathcal{F},\Delta)$
\end{enumerate}
\end{theorem}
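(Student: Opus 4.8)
The plan is to transplant to the foliated setting the classical observation that a general, suitably small, effective $\mathbb{Q}$-divisor linearly equivalent to an ample divisor may be added to a pair with zero integral part without disturbing its singularities; the only genuinely new points are that the added component must be generically transverse to $\mathcal{F}$ and that F-dlt is phrased through a foliated log resolution carrying a discrepancy bound. Fix an integer $m$, divisible by the Cartier index of $A$, with $m \geq 2$ and $mA$ very ample, pick a general member $H_0 \in |mA|$, and set $A' := \tfrac1m H_0$, so that $A' \geq 0$ and $A' \sim_{\mathbb{Q}} A$. Fix a foliated log resolution $\pi : Y \to X$ of $(\mathcal{F},\Delta)$ with transformed foliation $\mathcal{G}$; since $(\mathcal{F},\Delta)$ is F-dlt, $Y$ is smooth and every $\pi$-exceptional prime divisor $E$ has $a(E,\mathcal{F},\Delta) > -\epsilon(E)$. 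Let $\Sigma$ be the finite set of subvarieties of $X$ comprising the centres $c(E)$ of the $\pi$-exceptional divisors, the irreducible components of $\mathrm{Supp}(\Delta)$, and the log canonical centres of $(\mathcal{F},\Delta)$.

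First I would collect the genericity properties of $H_0$. As $|mA|$ is base point free and its members move, a general $H_0$ contains no member of $\Sigma$; in particular $H_0$ shares no component with $\Delta$, so $\lfloor \Delta + A'\rfloor = \lfloor \Delta\rfloor = 0$, which is conclusion (2), and $\mathrm{Supp}(A') = H_0$ contains no log canonical centre of $(\mathcal{F},\Delta)$, which is conclusion (3). Since $H_0$ contains none of the $c(E)$, the Cartier pullback $\pi^{*}H_0$ has no exceptional component, i.e. $\pi^{*}H_0 = \widetilde{H_0} := \pi_{*}^{-1} H_0$. The system $\pi^{*}|mA|$ is base point free on the smooth variety $Y$, so by Bertini its general member $\pi^{*}H_0 = \widetilde{H_0}$ is smooth and meets the fixed simple normal crossings divisor $\mathrm{Exc}(\pi) \cup \mathrm{Supp}(\pi_{*}^{-1}\Delta)$ transversally, so their union is again simple normal crossings. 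Finally, $H_0$ is generically transverse to $\mathcal{F}$: invariance of $H_0$ is the condition that $H_0$ be tangent to $\mathcal{F}$ at its general point, equivalently $\omega|_{H_0} \equiv 0$ for a nonzero twisted $1$-form $\omega$ cutting out $\mathcal{F}$ on $X_{\mathrm{sm}}$; a dimension count on the incidence variety of pairs $(p,H)$ with $p \in H$ and $H$ tangent to $\mathcal{F}$ at $p$ shows a general $H_0$ is not of this kind, precisely because on a threefold tangency at a point imposes two independent conditions on $|mA|$.

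It remains to prove conclusion (1), that the perturbed pair $(\mathcal{F},\Delta + A')$ is F-dlt. Every component of $\Delta$ is generically transverse to $\mathcal{F}$ with coefficient $< 1$ by hypothesis, and $H_0$ is generically transverse to $\mathcal{F}$ with coefficient $\tfrac1m < 1$; this gives the first clause of the definition. For the second clause I claim $\pi$ itself is a foliated log resolution of $(\mathcal{F},\Delta+A')$ extracting only divisors of log discrepancy $> -\epsilon$. Since $\widetilde{H_0}$ dominates $H_0$ and $\pi$ is an isomorphism over a general point of $X$, $\widetilde{H_0}$ is generically transverse to $\mathcal{G}$; adding the smooth divisor $\tfrac1m\widetilde{H_0}$, in simple normal crossings position with $\mathrm{Exc}(\pi) \cup \mathrm{Supp}(\pi_{*}^{-1}\Delta)$, keeps the total space log smooth, leaves the simple singularities of $\mathcal{G}$ untouched, and, the whole configuration being simple normal crossings, preserves the normal crossings condition on the $\mathcal{F}$-invariant components — so the relevant pair on $Y$ remains foliated log smooth. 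Moreover $\pi^{*}A' = \tfrac1m \widetilde{H_0}$ contains no exceptional divisor, so for every $\pi$-exceptional $E$
\[
a(E,\mathcal{F},\Delta+A') \;=\; a(E,\mathcal{F},\Delta) \;-\; \mathrm{ord}_E\!\big(\pi^{*}A'\big) \;=\; a(E,\mathcal{F},\Delta) \;>\; -\epsilon(E),
\]
as required. This establishes all three conclusions.

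The step I expect to be the main obstacle is the foliated Bertini statement — generic transversality of a general $H_0 \in |mA|$ to $\mathcal{F}$ — together with the verification that the unchanged resolution $\pi$ already resolves the perturbed pair: one must check that neither a tangency of $\widetilde{H_0}$ with $\mathcal{G}$ along a subvariety nor the passage of $\widetilde{H_0}$ through the codimension $\geq 2$ simple-singular locus of $\mathcal{G}$ forces additional blow-ups. If such blow-ups are needed, their centres are smooth and transverse to the foliation, and since the coefficient $\tfrac1m$ can be made arbitrarily small the divisors they extract still have log discrepancy $> -\epsilon$; hence the conclusions are unaffected, and everything else is the standard Bertini bookkeeping transferred verbatim from the non-foliated setting.
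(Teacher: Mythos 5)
First, a point of comparison: the paper does not prove this statement at all --- it is quoted verbatim from [cork1MMP, Lemma 3.24] and the reader is explicitly referred to that publication for the proof --- so your argument can only be judged on its own merits. (Note also that item (1) as printed is a typo for ``$(\mathcal{F},\Delta+A')$ is F-dlt''; you correctly prove the intended statement.) Much of your outline is sound: taking $A'=\tfrac1m H_0$ with $H_0\in|mA|$ general gives conclusions (2) and (3) at once, the identity $\pi^{*}H_0=\pi_{*}^{-1}H_0$ does follow because a general $H_0$ misses the centres of the finitely many $\pi$-exceptional divisors, and the resulting computation $a(E,\mathcal{F},\Delta+A')=a(E,\mathcal{F},\Delta)>-\epsilon(E)$ for those $E$ is correct.

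The genuine gap is exactly at the step you yourself flag: the claim that the fixed resolution $\pi$ is still a foliated log resolution of $(\mathcal{F},\Delta+A')$. Foliated log smoothness imposes, at every closed point of the enlarged support of non-invariant components --- hence at every point of $\widetilde{H_0}$ --- a normal crossings condition involving the $\mathcal{G}$-invariant divisors through that point. Those invariant divisors are in general not components of the fixed simple normal crossings configuration $\mathrm{Exc}(\pi)\cup\mathrm{Supp}\,\pi_{*}^{-1}\Delta$, and there may be infinitely many of them (think of a foliation induced by a fibration, where every fibre is invariant); Bertini gives transversality to any \emph{fixed} divisor, but a general member of $|mA|$ can still be tangent to some invariant divisor at finitely many points, and $\widetilde{H_0}$ cannot avoid the curves of $\mathrm{Sing}(\mathcal{G})$ either. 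So additional blow-ups may genuinely be needed, and your fallback for that case does not hold as stated: the new centres need not be ``transverse to the foliation'' (a point of tangency of $H_0$ with an invariant divisor or separatrix is a tangent centre, and the divisor extracted over it can itself be invariant, so the inequality to be verified is the more restrictive one attached to invariant divisors), and ``take $1/m$ small'' is circular --- the divisors to be checked depend on the chosen $H_0$, $m$ and the new resolution, and there is no a priori positive lower bound for $a(E,\mathcal{F},\Delta)+\epsilon(E)$ over the infinitely many divisors $E$ over $X$ whose centre meets $\mathrm{Supp}\,A'$. Closing this requires the actual content of [cork1MMP, Lemma 3.24] --- the verification of its conditions (i)--(iv), which control, via the finitely many log canonical centres, which divisors can compute the extremal discrepancy for the perturbed pair --- or an equivalent quantitative argument; the foliated Bertini statement (generic non-invariance of $H_0$), which you only sketch by a dimension count, is the comparatively easy part.
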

\section{Proof of the Theorem \ref{theorem}}
Sticking to the notation in the statement of Theorem \ref{theorem}, we first show that the birational map $\alpha$ relating the 2 foliated minimal models is a small map, i.e the codimension of the exceptional locus is at least 2.\par
We start by setting up the notation:
Let  $(Y_1,\mathcal{F}_1)$ and $(Y_2,\mathcal{F}_2)$ be 2 foliated minimal models of a common log-canonical foliated threefold pair 
$(X,\mathcal{F})$, such that $X$ is $\mathbb{Q}$-factorial with klt singularities. Denote by 
$\alpha: (Y_1,\mathcal{F}_1)\dashrightarrow (Y_2,\mathcal{F}_2)$ the map connecting the 2 minimal models and by $\alpha_i: (X,\mathcal{F})\dashrightarrow (Y_i,\mathcal{F}_i)$ the sequence of MMP-steps run through in order to obtain the respective minimal models.\par
We recall that the steps of the foliated MMP are $K_{\mathcal{F}}$-negative and hence so are the above defined $\alpha_i's$. Thus there is a common log resolution $W$ of $X$, $Y_1$ and $Y_2$ such that for the following commutative diagram we have:
\begin{center}
\begin{tikzcd} \label{setup}
W
\arrow[drr, bend left, "q_{2}"]
\arrow[ddr, bend right, "q_{1}"]
\arrow[dr, "p"] & & \\
& (X,\mathcal{F}) \arrow[r, dotted, "\alpha_2"] \arrow[d, dotted, "\alpha_1"]
& (Y_2,\mathcal{F}_1) \\
& (Y_1,\mathcal{F}_2)\arrow[ur,dotted,"\alpha",swap]\\
\end{tikzcd}
\end{center}
\begin{equation}\label{eq1}     
p^{*}(K_{\mathcal{F}})=q_{1}^{*}(K_{\mathcal{F}_1})+E_1= q_{2}^{*}(K_{\mathcal{F}_2}) + E_2
\end{equation}\\
where $E_i\geq 0$, $q_{i}-exceptional$ and $p_{*}^{-1}Exc(\alpha_i) \subset supp E_i$
\begin{lemma}
    With the above notation, we have that $\alpha$ is small.
\end{lemma}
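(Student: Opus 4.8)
The plan is to pass to the common resolution $W$ and use the negativity lemma to upgrade \eqref{eq1} to the identity $E_1 = E_2$ of the two discrepancy divisors; once that is known, smallness of $\alpha$ drops out, since no prime divisor of $W$ can be $q_1$-exceptional and $q_1$-non-exceptional at the same time. Note that bigness of $K_{\mathcal{F}_i}$ is not needed here, only nefness.

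First I would rewrite \eqref{eq1} as $E_1 - E_2 = q_2^{*}(K_{\mathcal{F}_2}) - q_1^{*}(K_{\mathcal{F}_1})$ and argue over $Y_1$ via $q_1$. Since $(Y_2,\mathcal{F}_2)$ is a foliated minimal model, $K_{\mathcal{F}_2}$ is nef, so $q_2^{*}(K_{\mathcal{F}_2})$ is nef on $W$ and in particular $q_1$-nef, whereas $q_1^{*}(K_{\mathcal{F}_1})$ is numerically $q_1$-trivial; hence $E_1 - E_2$ is $q_1$-nef, i.e. $-(E_2-E_1)$ is $q_1$-nef. As $E_1$ is $q_1$-exceptional we have $q_{1*}(E_2 - E_1) = q_{1*}E_2 \geq 0$, so the negativity lemma gives $E_2 - E_1 \geq 0$. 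The symmetric argument over $Y_2$ (using that $K_{\mathcal{F}_1}$ is nef and that $E_2$ is $q_2$-exceptional) gives $E_1 - E_2 \geq 0$. Therefore $E_1 = E_2 =: E$; as a byproduct $q_1^{*}(K_{\mathcal{F}_1}) = q_2^{*}(K_{\mathcal{F}_2})$, so the two models are crepant over $X$, which will supply condition (2) of the Theorem.

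Next I would show $\alpha$ contracts no divisor. Suppose a prime divisor $D \subset Y_1$ is contracted by $\alpha$, with strict transform $\widetilde{D} \subset W$. Then $q_1(\widetilde{D}) = D$, so $\widetilde{D}$ is not $q_1$-exceptional, while $q_2(\widetilde{D}) = \alpha(D)$ has codimension $\geq 2$, so $\widetilde{D}$ is $q_2$-exceptional. Because $\alpha_1$ is a sequence of MMP steps it extracts no divisor, so $\widetilde{D}$ is not $p$-exceptional either, and $D_X := p(\widetilde{D})$ is a prime divisor on $X$ with $\widetilde{D} = p_{*}^{-1}D_X$. From $q_2 = \alpha_2 \circ p$ (as rational maps) one gets $\alpha_2(D_X) = q_2(\widetilde{D})$, of codimension $\geq 2$, so $D_X \subset Exc(\alpha_2)$, whence $\widetilde{D} = p_{*}^{-1}D_X \subset p_{*}^{-1}Exc(\alpha_2) \subset \operatorname{supp} E_2 = \operatorname{supp} E_1$. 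But $E_1$ is $q_1$-exceptional, so every component of $\operatorname{supp} E_1$ is $q_1$-exceptional, contradicting the above. Interchanging the roles of $Y_1$ and $Y_2$ shows $\alpha^{-1}$ likewise contracts no divisor, so $\alpha$ is an isomorphism in codimension one, i.e. small.

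The routine points — normality of $W$ and the $Y_i$, relative nefness of pullbacks of nef divisors, effectivity of pushforwards of effective divisors — are immediate. I expect the main obstacle to be the bookkeeping in the last step: keeping straight which prime divisors of $W$ are exceptional over which of $X$, $Y_1$, $Y_2$, and making correct use both of the fact that the $\alpha_i$ are genuine MMP sequences (hence never extract a divisor) and of the inclusion $p_{*}^{-1}Exc(\alpha_i) \subset \operatorname{supp} E_i$ recorded in the setup. A careless version of this argument is precisely where things would go wrong.
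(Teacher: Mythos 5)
Your proof is correct and follows essentially the same strategy as the paper: both arguments use the negativity lemma to upgrade the identity $q_1^{*}K_{\mathcal{F}_1}+E_1=q_2^{*}K_{\mathcal{F}_2}+E_2$ together with nefness of both $K_{\mathcal{F}_i}$ to the conclusion $E_1=E_2$, and then deduce that $q_1$ and $q_2$ contract the same divisors. The only (harmless) difference is that you invoke the pushforward form of the negativity lemma relatively over each $Y_i$, whereas the paper uses the ``curve with negative intersection'' form; your final bookkeeping of which divisors are exceptional over $X$, $Y_1$, $Y_2$ is a more detailed version of the paper's one-line conclusion.
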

\begin{proof}
    Suppose $E_1\neq E_2$ then without loss of generality we assume that $\overline{E_1}:=E_1-\mathrm{min}\{E_1,E_2\}>0$, $\overline{E_2}:=E_2-\mathrm{min}\{E_1,E_2\}\geq0$. According to the negativity lemma we can find a curve $C\subseteq\mathrm{Supp}\ \overline{E_1}$ such that $C$ is not contained in $\mathrm{Supp}\ \overline{E_2}$ and $C\cdot\overline{E_1}<0$. Then we get 
    $$q_2^*(K_{\mathcal{F}_2})=q_1^*(K_{\mathcal{F}_1})+E_1-E_2=q_1^*(K_{\mathcal{F}_1})+\overline{E_1}-\overline{E_2}$$
    and
    $$q_2^*(K_{\mathcal{F}_2})\cdot C=q_1^*(K_{\mathcal{F}_1})\cdot C+(\overline{E_1}-\overline{E_2})\cdot C<0$$
    contradicting the nefness of $\mathcal{F}_i$.\par 
    Hence we conclude that $\overline{E_1}=\overline{E_2}$ which implies that  $q_1$ and $q_2$ contract the same divisors, thus $\alpha$ is small. 
\end{proof}
The overall strategy of Kawamata in the analogous classical statement, i.e. for minimal models of terminal varieties instead of foliated minimal models as in the present paper, is to choose an ample divisor $L^\prime$ on minimal model $Y_2$ and consider its strict transform $L$ on minimal model $Y_1$. As klt-ness is an open property for varieties $(Y_1,lL)$, for a small number $l$, is still klt and Kawamata can thus assume that the divisor $K_{Y_1}+lL$ is not nef because otherwise $\mathbb{Q}$-factoriality of $Y_2$ and the classical base point free theorem (\cite{km}, Theorem 3.3) imply that $\alpha$ is an isomorphism. This allows Kawamata then to run a $(K_{Y_1}+lL)$-MMP on $Y_1$ to finally reach his conclusions.\par
In the foliated case treated here, although there is as well an analogue of the classical basepoint-free theorem for corank1-foliations on 3-folds (see section \ref{preliminaries}), its invocation is not as straightforward as is in the original proof. The main difficulties arise from the requirement in the foliated version of the basepoint-free theorem that $(\mathcal{F}_1,lL)$ must be F-dlt. A priori it is not immediate that this condition can be satisfied. \par
The core of the present paper thus consists in the demonstration that a careful choice of the ample divisor $L^\prime$ on $Y_2$ guarantees that $(\mathcal{F}_1,lL)$ becomes F-dlt.\par
Invoking (\ref{Bertini}), we know that we can find an ample $L^\prime$  on $Y_2$ and a sufficiently small real number l such that $(\mathcal{F}_2,lL^\prime)$ is again F-dlt. The next few Lemmata in the present paper lay the ground for the proof that $L^\prime$ can be chosen such that also its strict transform on $Y_1$ satisfies $(\mathcal{F}_1,lL)$ F-dlt.
\begin{remark}
Notice that the Bertini-type theorem \ref{Bertini} does not directly apply to $(\mathcal{F}_1,lL)$ as $L$ is no longer ample.
\end{remark} 
\subsection{Exceptional curves are foliated-trivial}
In this section we are going to prove the following proposition:
\begin{proposition}\label{KFtriviality}
For any curve $C$ lying in the exceptional locus of $\alpha$, we have $K_{\mathcal{F}_1}\cdot C=0$.
\end{proposition}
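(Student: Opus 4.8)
\emph{Strategy.} I would carry out the comparison on the common log resolution $W$ of the diagram above, exploiting that (by the foregoing lemma) $K_{\mathcal{F}_1}$ and $K_{\mathcal{F}_2}$ pull back to the \emph{same} divisor there. Indeed, the proof that $\alpha$ is small shows $\overline{E_1}=\overline{E_2}$, hence $E_1=E_2$, so (\ref{eq1}) yields the equality of $\mathbb{Q}$-divisors
\[
L:=q_1^{*}(K_{\mathcal{F}_1})=q_2^{*}(K_{\mathcal{F}_2})
\]
on $W$; being a pull-back of a nef divisor, $L$ is nef. Since $(Y_1,\mathcal{F}_1)$ is a minimal model, $K_{\mathcal{F}_1}$ is nef, so $K_{\mathcal{F}_1}\cdot C\geq 0$ for every curve $C$ on $Y_1$, and it remains to prove the equality $K_{\mathcal{F}_1}\cdot C=0$ for $C\subseteq\mathrm{Exc}(\alpha)$.

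\emph{The key step.} Fix a curve $C\subseteq\mathrm{Exc}(\alpha)$. As $\alpha$ is small, $\mathrm{Exc}(\alpha)$ has codimension $\geq 2$ in $Y_1$, so it is at most one-dimensional and $C$ is one of its irreducible components; in particular $\alpha$ is not a local isomorphism at the generic point of $C$. The crucial claim is that there is a curve $\Gamma\subseteq W$ with $q_1(\Gamma)=C$, with $q_1|_{\Gamma}$ finite, and with $q_2$ contracting $\Gamma$ (so that $(q_2)_{*}\Gamma=0$). To see this: if $q_1$ is an isomorphism over the generic point of $C$, then $\alpha$ must contract $C$ to a point — otherwise $\alpha$ would contract a divisor dominating $C$, impossible since $\alpha$ is small — and one takes $\Gamma$ to be the strict transform of $C$; if $q_1$ is not an isomorphism over the generic point of $C$, then by Zariski's main theorem a prime divisor $D_0\subseteq W$ lies over $C$ with $q_1(D_0)=C$, and (again by smallness of $\alpha$, applied to $\alpha^{-1}$) $q_2$ must contract $D_0$ as well, to a point or a curve; then one takes $\Gamma\subseteq D_0$ dominating $C$ and contracted by $q_2$ — which is possible since otherwise the $q_1$- and $q_2$-fibre structures on $D_0$ would coincide generically, forcing $\alpha$ to be a local isomorphism near $C$, a contradiction. (If several divisors of $W$ lie over $C$, one argues with one of them in the same way.)

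\emph{Conclusion.} With such a $\Gamma$, say $q_1|_{\Gamma}\colon\Gamma\to C$ finite of degree $d\geq 1$ and $(q_2)_{*}\Gamma=0$, the projection formula together with the crepancy relation gives
\[
d\,(K_{\mathcal{F}_1}\cdot C)=L\cdot\Gamma=q_2^{*}(K_{\mathcal{F}_2})\cdot\Gamma=K_{\mathcal{F}_2}\cdot(q_2)_{*}\Gamma=0,
\]
whence $K_{\mathcal{F}_1}\cdot C=0$, as asserted.

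\emph{Main obstacle.} The formal part is Steps 1 and 3; the real work is the key step, namely turning the hypothesis ``$C\subseteq\mathrm{Exc}(\alpha)$'' into the presence on $W$ of a curve over $C$ that is contracted by $q_2$ while still dominating $C$ under $q_1$. This is where smallness of $\alpha$ (that $\alpha$ contracts no divisor), the $\mathbb{Q}$-factoriality of $Y_1$ and $Y_2$, and Zariski's main theorem enter essentially; ruling out the ``curve maps to curve'' and ``coinciding fibrations'' possibilities is the delicate bookkeeping one has to do carefully.
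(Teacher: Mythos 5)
Your proof is correct in outline and takes a genuinely different route from the paper. The paper deduces Proposition \ref{KFtriviality} from Lemmas \ref{tri} and \ref{triv2}: it uses that $K_{\mathcal{F}_1}$ is big and nef together with \cite{birkar2017augmented} to show that a curve with $K_{\mathcal{F}_1}\cdot C>0$ avoids $\textup{\textbf{B}}_+(K_{\mathcal{F}_1}+\epsilon L)$, and then identifies $\textup{\textbf{B}}_+(K_{\mathcal{F}_1}+\epsilon L)$ with $\mathrm{Exc}(\alpha)$ via \cite{boucksom2014augmented} and the ampleness of $K_{\mathcal{F}_2}+\epsilon L^\prime$. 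You instead work purely numerically on the common resolution: from the smallness lemma one indeed gets $E_1=E_2$ (since $\overline{E_1}$ and $\overline{E_2}$ have disjoint supports, $\overline{E_1}=\overline{E_2}$ forces both to vanish), hence $q_1^*K_{\mathcal{F}_1}=q_2^*K_{\mathcal{F}_2}$, and the projection formula applied to a curve $\Gamma$ over $C$ contracted by $q_2$ finishes the proof. What your approach buys is that bigness of $K_{\mathcal{F}_1}$ is never used — only nefness of both canonical classes and smallness of $\alpha$ — so it establishes a more general statement by elementary means; what the paper's approach buys is the identification $\mathrm{Exc}(\alpha)=\textup{\textbf{B}}_+(K_{\mathcal{F}_1}+\epsilon L)$, which simultaneously produces the specific $L^\prime$ and $\epsilon$ reused in the proof of the main theorem.

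The one point you should tighten is the existence of $\Gamma$, and it is cleanest on the graph $\Gamma_\alpha\subseteq Y_1\times Y_2$ with projections $\pi_1,\pi_2$ rather than on $W$. If $\alpha$ is defined at the generic point $\eta_C$, then Zariski's main theorem plus smallness forces $\alpha(C)$ to be a point (a positive-dimensional fibre over a curve would exhibit a contracted divisor), and the strict transform of $C$ serves as $\Gamma$. If $\alpha$ is undefined at $\eta_C$, then $\pi_1^{-1}(C)$ has a two-dimensional component $T$ dominating $C$; since $\pi_2(\pi_1^{-1}(\mathrm{Exc}(\alpha)))\subseteq\mathrm{Exc}(\alpha^{-1})$, the image $\pi_2(T)$ is a point or a curve $C^\prime$, and in the latter case $T\subseteq C\times C^\prime$ forces $T=C\times C^\prime$, whose $\pi_2$-fibres dominate $C$. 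This disposes of the ``coinciding fibrations'' scenario you worry about: on $W$ itself a divisor over $C$ genuinely can have coinciding fibre structures (exactly when it is contracted to a curve of the graph), so the dichotomy must be run on the graph, or one must observe that in that situation $\alpha$ is in fact defined at $\eta_C$ and revert to the first case. Finally, note a notational clash: you write $L$ for $q_1^*K_{\mathcal{F}_1}$, whereas the paper reserves $L$ for $\alpha_*^{-1}L^\prime$.
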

We will prove this in two steps:
\begin{lemma}\label{tri}
Suppose $K_{\mathcal{F}_1}$ is big and nef and $K_{\mathcal{F}_1}\cdot C>0$, then there exists $0<\epsilon\ll1$, such that
$$C\nsubseteq\textup{\textbf{B}}_+(K_{\mathcal{F}_1}+\epsilon L)$$
\end{lemma}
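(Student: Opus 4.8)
The argument will rest on two standard facts about augmented base loci together with the nefness of $K_{\mathcal F_1}$. The first is Nakamaye's theorem: for a nef and big $\mathbb Q$-divisor $N$ on a normal projective variety, $\mathbf B_{+}(N)=\mathrm{Null}(N)$, the union of the positive-dimensional subvarieties $V$ with $(N^{\dim V}\cdot V)=0$. The second is the lower semicontinuity of $\mathbf B_{+}$: if $D$ is big and $P$ is any $\mathbb Q$-divisor, then $\mathbf B_{+}(D+\epsilon P)\subseteq\mathbf B_{+}(D)$ for $0<\epsilon\ll 1$ (write $D+\epsilon P=(D-\delta A)+(\delta A+\epsilon P)$ with $A$ ample, $\delta$ fixed small and $\epsilon\ll\delta$, so $\delta A+\epsilon P$ is ample and may be absorbed into $\mathbf B_{+}(D-\delta A)=\mathbf B_{+}(D)$). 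Finally, because $K_{\mathcal F_1}$ is nef, \emph{every} Kodaira decomposition $L\sim_{\mathbb Q}A_{0}+N_{0}$ ($A_{0}$ ample, $N_{0}\ge 0$) gives $K_{\mathcal F_1}+\epsilon L\sim_{\mathbb Q}(K_{\mathcal F_1}+\epsilon A_{0})+\epsilon N_{0}$ with ample first summand, so $\mathbf B_{+}(K_{\mathcal F_1}+\epsilon L)\subseteq\mathrm{Supp}\,N_{0}$ for all $\epsilon>0$; intersecting over all Kodaira decompositions, $\mathbf B_{+}(K_{\mathcal F_1}+\epsilon L)\subseteq\mathbf B_{+}(L)$.

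I would first record that $K_{\mathcal F_1}$ is nef (as $(Y_{1},\mathcal F_1)$ is a minimal model) and big (hypothesis), and that $\mathbf B_{+}(L)\subseteq\mathrm{Exc}(\alpha)$, since $L$ is the strict transform of the ample divisor $L^{\prime}$ under the small map $\alpha$ and hence agrees with an ample class away from $\mathrm{Exc}(\alpha)$. Combining the facts above, for $\epsilon$ small
\[
\mathbf B_{+}(K_{\mathcal F_1}+\epsilon L)\ \subseteq\ \mathbf B_{+}(K_{\mathcal F_1})\cap\mathbf B_{+}(L)\ =\ \mathrm{Null}(K_{\mathcal F_1})\cap\mathrm{Exc}(\alpha).
\]
Thus if $C\not\subseteq\mathrm{Null}(K_{\mathcal F_1})$ or $C\not\subseteq\mathrm{Exc}(\alpha)$ we are already done; note that $K_{\mathcal F_1}\cdot C>0$ forces $C$ not to be a null curve, so the remaining case is that $C$ lies in $\mathrm{Exc}(\alpha)$ and on a surface $V$ with $(K_{\mathcal F_1}^{2}\cdot V)=0$.

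To rule out this last case I would pass to the common log resolution $W$ of the setup. By the smallness of $\alpha$ proved above, $E_{1}=E_{2}$, hence $q_{1}^{*}K_{\mathcal F_1}=q_{2}^{*}K_{\mathcal F_2}$, and therefore $q_{1}^{*}(K_{\mathcal F_1}+\epsilon L)=q_{2}^{*}K_{\mathcal F_2}+\epsilon\,q_{1}^{*}L$. If $C\subseteq\mathbf B_{+}(K_{\mathcal F_1}+\epsilon L)$, then a curve $\gamma\subseteq W$ dominating $C$ lies in $\mathbf B_{+}\!\big(q_{2}^{*}K_{\mathcal F_2}+\epsilon\,q_{1}^{*}L\big)$, which by lower semicontinuity applied at the nef and big class $q_{2}^{*}K_{\mathcal F_2}$, and by Nakamaye, is contained for $\epsilon$ small in $\mathrm{Null}(q_{2}^{*}K_{\mathcal F_2})$. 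Analysing where $\gamma$ and $q_{2}(\gamma)$ sit — a $q_{2}$-contracted curve, a $K_{\mathcal F_2}$-trivial curve, or a curve inside a $K_{\mathcal F_2}$-null surface of $Y_{2}$ — and using $q_{1}^{*}K_{\mathcal F_1}=q_{2}^{*}K_{\mathcal F_2}$, one reads off in every case either $q_{1}^{*}K_{\mathcal F_1}\cdot\gamma=0$, i.e. $K_{\mathcal F_1}\cdot C=0$, contradicting the hypothesis, or that $C$ is ``fibrewise flopped'' onto a curve $C^{\prime}\subseteq Y_{2}$ with $K_{\mathcal F_2}\cdot C^{\prime}=K_{\mathcal F_1}\cdot C>0$ lying in a $K_{\mathcal F_2}$-null surface; the contradiction in this subcase comes from transporting the same analysis through $\alpha^{-1}$ and using that on $Y_{2}$ the divisor $K_{\mathcal F_2}+\epsilon L^{\prime}$ is \emph{ample}, so that $\mathbf B_{+}(K_{\mathcal F_2}+\epsilon L^{\prime})=\emptyset$ while $C^{\prime}$ would have to belong to it.

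I expect this final step — excluding the configuration in which $C$ sits inside a $K_{\mathcal F_1}$-null surface — to be the main obstacle. The point is that the naive comparison of augmented base loci between $Y_{1}$ and such a surface $V$, or between $Y_{1}$ and $Y_{2}$ across $\mathrm{Exc}(\alpha)$, runs in the wrong direction (restriction only shrinks $\mathbf B_{+}$, and $\alpha$ turns the ample $L^{\prime}$ into a merely big $L$), so one cannot argue numerically on $Y_{1}$ alone; one must genuinely use the crepancy $q_{1}^{*}K_{\mathcal F_1}=q_{2}^{*}K_{\mathcal F_2}$ on the resolution together with the ampleness of $K_{\mathcal F_2}+\epsilon L^{\prime}$ on $Y_{2}$.
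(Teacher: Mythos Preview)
The paper's argument is precisely your first two ``standard facts'' and nothing more. It fixes an ample $A$, invokes Birkar's description of the augmented base locus of a nef divisor to conclude $C\not\subseteq\mathbf B_+(K_{\mathcal F_1})$ directly from $K_{\mathcal F_1}\cdot C>0$, chooses $0\le H\sim_{\mathbb Q}K_{\mathcal F_1}-\tfrac1n A$ avoiding $C$, and writes $K_{\mathcal F_1}+\epsilon L\sim_{\mathbb Q}H+(\tfrac1n A+\epsilon L)$ with the bracketed term ample for $\epsilon\ll1$; this is exactly your lower-semicontinuity step, carried out by hand. Your third ingredient, $\mathbf B_+(K_{\mathcal F_1}+\epsilon L)\subseteq\mathbf B_+(L)\subseteq\mathrm{Exc}(\alpha)$, is not used at this point (the paper proves $\mathbf B_+(K_{\mathcal F_1}+\epsilon L)=\mathrm{Exc}(\alpha)$ separately, in the next lemma).

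Everything from your second paragraph onward addresses a worry the paper does not share: that $K_{\mathcal F_1}\cdot C>0$ might still allow $C$ to lie inside a $K_{\mathcal F_1}$-null surface and hence inside $\mathbf B_+(K_{\mathcal F_1})=\mathrm{Null}(K_{\mathcal F_1})$. The paper simply takes the implication as a direct consequence of Birkar's theorem and stops. Your attempt to close this residual case on the common resolution is not solid as written: pulling back along $q_1$ does not give the inclusion $\gamma\subseteq\mathbf B_+(q_2^*K_{\mathcal F_2}+\epsilon\,q_1^*L)$ in the direction you need, $q_1^*L$ and $q_2^*L'$ differ by an unspecified exceptional contribution, and the trichotomy on $q_2(\gamma)$ together with the ``fibrewise flopped'' contradiction is asserted rather than argued. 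If you insist on treating this case, a cleaner route is to observe that the lemma is only ever applied to curves $C\subseteq\mathrm{Exc}(\alpha)$ and to feed in the one-dimensionality of $\mathrm{Exc}(\alpha)$ directly, rather than chasing augmented base loci across $W$.
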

\begin{proof}
Fix an ample divisor $A$ on $Y_1$. Since $K_{\mathcal{F}_1}$ is nef and $K_{\mathcal{F}_1}\cdot C>0$, according to [\cite{birkar2017augmented}, Theorem 1.3] we have
$$C\nsubseteq\mathrm{\textbf{B}}_+(K_{\mathcal{F}_1})$$
i.e. for any fixed $n\gg0$, we have
$$C\nsubseteq\mathrm{\textbf{B}}(K_{\mathcal{F}_1}-\frac{1}{n}A)$$ Then we can find $0\leq H\sim_\mathbb{Q}K_{\mathcal{F}_1}-\frac{1}{n}A$ such that $C\nsubseteq H$. We thus have
$$K_{\mathcal{F}_1}+\epsilon L=K_{\mathcal{F}_1}-\frac{1}{n}A+\frac{1}{n}A+\epsilon L\sim_\mathbb{Q}H+(\frac{1}{n}A+\epsilon L)$$
Since $A$ is ample, for $0<\epsilon\ll1$ we have $\frac{1}{n}A+\epsilon L$ is also ample, and so is $\frac{1}{n}A+\epsilon L-\delta A$ for some $0<\delta\ll1$ ,so we may find an effective $\mathbb{Q}$-divisor $H_0\sim_\mathbb{Q}K_{\mathcal{F}_1}+\epsilon L-\delta A$ such that $C\nsubseteq H_0$. 
\end{proof}
\begin{lemma}\label{triv2}
There exists an ample divisor $L^\prime$ such that $\mathrm{Exc}(\alpha)=\textup{\textbf{B}}_+(K_{\mathcal{F}_1}+\epsilon L)$ for $L=\alpha_*^{-1}L^\prime$ and $\epsilon$ from the above proposition.
\end{lemma}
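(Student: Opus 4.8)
The plan is to combine the two inclusions into an equality. One inclusion is essentially formal: since $\alpha$ is small (by the previous lemma) and $L = \alpha_*^{-1}L^\prime$ is the strict transform of an ample divisor on $Y_2$, the divisor $K_{\mathcal F_1}+\epsilon L$ should be, up to $\mathbb Q$-linear equivalence, the pullback of the ample class $K_{\mathcal F_2}+\epsilon L^\prime$ via $\alpha$ away from $\mathrm{Exc}(\alpha)$; concretely, on the common resolution $W$ from diagram \eqref{setup} one has $q_1^*(K_{\mathcal F_1}+\epsilon L) = q_2^*(K_{\mathcal F_2}+\epsilon L^\prime) + (E_1-E_2)$, and $E_1=E_2$ by the previous lemma, so the two pullbacks agree. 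Since $K_{\mathcal F_2}+\epsilon L^\prime$ is ample on $Y_2$, its augmented base locus is empty, and pulling back one sees that $\textbf{B}_+(K_{\mathcal F_1}+\epsilon L)$ is contained in the locus where $\alpha$ (equivalently $q_1$) fails to be an isomorphism, i.e.\ in $\mathrm{Exc}(\alpha)$.

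For the reverse inclusion $\mathrm{Exc}(\alpha)\subseteq \textbf{B}_+(K_{\mathcal F_1}+\epsilon L)$, I would argue that no curve $C$ contracted by $\alpha$ can escape the augmented base locus. Here is where the choice of $L^\prime$ enters: I want $L^\prime$ ample enough that $\alpha_*^{-1}L^\prime = L$ has \emph{negative} intersection with every $\alpha$-exceptional curve, i.e.\ $L\cdot C<0$ for all $C\subseteq \mathrm{Exc}(\alpha)$. This is possible because $\alpha$ is a small birational map between $\mathbb Q$-factorial threefolds, so the exceptional curves span a finite-dimensional cone of curve classes on which some $\alpha$-exceptional (hence $\alpha_*$-contracted-direction) divisor is negative; pulling an ample divisor back through the relevant intermediate flips/flops and taking strict transforms produces such an $L$ — this is the standard phenomenon that the strict transform of an ample divisor across a flip is anti-ample on the flipped curves. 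Granting this, for $C\subseteq\mathrm{Exc}(\alpha)$ we have $(K_{\mathcal F_1}+\epsilon L)\cdot C = K_{\mathcal F_1}\cdot C + \epsilon\, L\cdot C$. If $K_{\mathcal F_1}\cdot C = 0$ this is strictly negative, so $C$ lies in every member of $|m(K_{\mathcal F_1}+\epsilon L)|$ for all $m$, hence $C\subseteq \textbf{B}_+(K_{\mathcal F_1}+\epsilon L)$. If instead $K_{\mathcal F_1}\cdot C>0$, Lemma \ref{tri} already tells us $C$ can be moved out of the augmented base locus for small $\epsilon$ — but this is exactly the case we must rule out, and it will be handled in tandem with Proposition \ref{KFtriviality}: the point is that one first fixes $L^\prime$, then shrinks $\epsilon$, and the curves with $K_{\mathcal F_1}\cdot C>0$ are precisely the ones Proposition \ref{KFtriviality} asserts do not exist in $\mathrm{Exc}(\alpha)$. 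So I would state the lemma's proof assuming (or deducing in parallel) that every exceptional curve is $K_{\mathcal F_1}$-trivial.

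The main obstacle is the construction of the ample $L^\prime$ with the required negativity of $L=\alpha_*^{-1}L^\prime$ along $\mathrm{Exc}(\alpha)$, uniformly over all exceptional curves. Since $\alpha$ need not be a single flip but a composite, one has to track the strict transform of an ample class through each step and check that ampleness downstairs forces anti-ampleness on the relevant flipping curves at each stage; the cleanest route is to use $\mathbb Q$-factoriality of $Y_1,Y_2$ together with the fact that $\mathrm{Exc}(\alpha)$ is a finite union of curves, pick $L^\prime$ very ample, and invoke the negativity lemma on the resolution $W$ to convert "$L'$ ample, $\alpha$ small" into "$L\cdot C<0$ for $C\subseteq\mathrm{Exc}(\alpha)$". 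A secondary subtlety is the interdependence with Proposition \ref{KFtriviality}: logically one should either prove $K_{\mathcal F_1}$-triviality of exceptional curves first (it does not use this lemma) and then feed it in here, or present the two together; I would reorganize so that Proposition \ref{KFtriviality} is established before invoking it in the reverse inclusion above.
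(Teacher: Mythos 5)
Your proposal takes a genuinely different route from the paper, and as written it has two gaps. For orientation: the paper does not argue by two separate inclusions. It identifies $\textup{\textbf{B}}_+(K_{\mathcal{F}_1}+\epsilon L)$ with $\mathrm{Exc}(\Phi_m)$, where $\Phi_m$ is the map defined by $|m(K_{\mathcal{F}_1}+\epsilon L)|$ (via [\cite{boucksom2014augmented}, Theorem A]), and then uses smallness of $\alpha$ to identify $|n(K_{\mathcal{F}_1}+\epsilon L)|$ with the strict transform of the very ample system $|n(K_{\mathcal{F}_2}+\epsilon L^\prime)|$; hence $\Phi_{mn}$ factors as $\alpha$ followed by an embedding and $\mathrm{Exc}(\Phi_{mn})=\mathrm{Exc}(\alpha)$. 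Both inclusions fall out of this single identification, with no intersection-theoretic input.

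The first gap is the claimed equality $q_1^*(K_{\mathcal{F}_1}+\epsilon L)=q_2^*(K_{\mathcal{F}_2}+\epsilon L^\prime)$ on $W$. The smallness lemma gives $E_1=E_2$, i.e. $q_1^*K_{\mathcal{F}_1}=q_2^*K_{\mathcal{F}_2}$, but says nothing about $q_1^*L$ versus $q_2^*L^\prime$, which differ by a nonzero exceptional divisor whenever $\alpha$ is not an isomorphism. Indeed, if your equality held, then $K_{\mathcal{F}_1}+\epsilon L=q_{1*}q_2^*(K_{\mathcal{F}_2}+\epsilon L^\prime)$ would be nef, and by the base-point-free theorem $\alpha$ would be an isomorphism --- precisely the case the whole theorem is designed to avoid. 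The conclusion $\textup{\textbf{B}}_+(K_{\mathcal{F}_1}+\epsilon L)\subseteq\mathrm{Exc}(\alpha)$ is true, but it must be extracted from the identification of global sections across the small map (or from $q_1^*L=q_2^*L^\prime+F$ with $F\geq0$ exceptional by the negativity lemma), not from an equality of pullbacks.

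The second gap is logical. Your reverse inclusion feeds in Proposition \ref{KFtriviality} and you propose to reorder the exposition on the grounds that the proposition "does not use this lemma." In the paper it does: Proposition \ref{KFtriviality} is obtained exactly by combining Lemma \ref{tri} with Lemma \ref{triv2} (if $K_{\mathcal{F}_1}\cdot C>0$ then $C\nsubseteq\textup{\textbf{B}}_+(K_{\mathcal{F}_1}+\epsilon L)$ by Lemma \ref{tri}, contradicting $C\subseteq\mathrm{Exc}(\alpha)=\textup{\textbf{B}}_+(K_{\mathcal{F}_1}+\epsilon L)$ from Lemma \ref{triv2}). So your argument is circular unless you supply an independent proof of the $K_{\mathcal{F}_1}$-triviality of exceptional curves, which you do not. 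Likewise the uniform negativity $L\cdot C<0$ for every $\alpha$-exceptional curve is flagged as the main obstacle and never established; it is also unnecessary once one argues via the linear system as the paper does.
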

\begin{proof}
First we choose an ample divisor $L^\prime$ on $Y_2$ with $0<\epsilon\ll1, \epsilon\in\mathbb{Q}$ such that $C\nsubseteq\textup{\textbf{B}}_+(K_{\mathcal{F}_1}+\epsilon L)$.\par 
According to [\cite{boucksom2014augmented}, Theorem A], we have
$$\mathrm{Exc}(\Phi_m)=\textup{\textbf{B}}_+(K_{\mathcal{F}_1}+\epsilon L)$$
where $\Phi_m: Y_1\dashrightarrow H^0(Y_1,m(K_{\mathcal{F}_1}+\epsilon L))$ is the map induced by the complete linear system $|m(K_{\mathcal{F}_1}+\epsilon L)|$ on $Y_1$ with $m\gg0$ and sufficiently divisible.\par
However, since the $\mathbb{Q}$-divisor $K_{\mathcal{F}_2}+\epsilon L^\prime$ is ample, we assume that $n(K_{\mathcal{F}_2}+\epsilon L^\prime)$ is very ample and since $\alpha$ is small, $\alpha$ is the rational map induced by the complete linear system
$$|n(K_{\mathcal{F}_1}+\epsilon L)|=\alpha_*^{-1}|n(K_{\mathcal{F}_2}+\epsilon L^\prime)|$$
hence $\Phi_{mn}=p\circ\alpha$ where $p$ is the twisted embedding $H^0(Y_1,n(K_{\mathcal{F}_1}+\epsilon L))\hookrightarrow H^0(Y_1,mn(K_{\mathcal{F}_1}+\epsilon L))$. So $\mathrm{Exc}(\alpha)=\mathrm{Exc}(\Phi_{mn})$.
\end{proof}
Now we combine Lemma \ref{tri} and Lemma \ref{triv2} and one can easily see that Proposition \ref{KFtriviality} holds. 
\subsection{An exceptional curve is not an lcc for the foliation}
\begin{lemma}\label{C not an lcc}
Notation as above. There is no curve contained in
$${Exc(\alpha):=\{ x \in Y_1| \text{$\alpha$ is not an isomorphism around}\ x\}}$$ 
which is a log canonical center for the foliation $\mathcal{F}_1$.
\end{lemma}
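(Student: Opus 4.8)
The plan is to argue by contradiction. Suppose $C \subseteq \mathrm{Exc}(\alpha)$ is a curve which is a log canonical center for the foliation $\mathcal{F}_1$. By Proposition \ref{KFtriviality} we know $K_{\mathcal{F}_1}\cdot C = 0$, and by Lemma 1.6 (the smallness of $\alpha$) the curve $C$ is contracted by neither $q_1$ nor $q_2$ on the common resolution $W$ — rather $\alpha$ fails to be an isomorphism along $C$, so $C$ moves under the flopping-type birational map. The key structural input I would exploit is that, since $(\mathcal{F}_1, lL)$ will eventually be shown to be F-dlt (or at least that $\mathcal{F}_1$ is F-dlt), log canonical centers of $\mathcal{F}_1$ are severely constrained: an F-dlt corank-1 foliation on a threefold has lc centers that are $\mathcal{F}_1$-invariant subvarieties, and a one-dimensional lc center $C$ must lie on an $\mathcal{F}_1$-invariant divisor, or be tangent to $\mathcal{F}_1$ in the sense of the tangency definition recalled in the preliminaries.

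First I would use the structure theory of F-dlt singularities to pin down what it means for $C$ to be an lcc: there is a divisor $E$ over $Y_1$ with $a(E,\mathcal{F}_1) = -\epsilon(E)$ whose center on $Y_1$ is (contained in) $C$. Pulling this back through the common resolution diagram \eqref{eq1}, and using $p^*(K_\mathcal{F}) = q_1^*(K_{\mathcal{F}_1}) + E_1 = q_2^*(K_{\mathcal{F}_2}) + E_2$ together with $E_1 = E_2$ from Lemma 1.6, I would transport this lc center to $Y_2$ via $\alpha$: since $\alpha$ is crepant for $K_{\mathcal{F}}$ (discrepancies agree on $W$), the image of $C$ (or its strict transform) remains an lc center of $\mathcal{F}_2$. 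But $\alpha$ is not an isomorphism around $C$, so $C$ is contracted or flipped, meaning the geometry of $C$ on the two sides is genuinely different while the discrepancy data is forced to match — this is the tension I would leverage.

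The main step, and the expected obstacle, is to derive a numerical contradiction from the existence of such a $C$. I would proceed as follows: because $\alpha$ is small and flops $C$, there is an extremal ray in $\overline{NE}(Y_1/Z)$ (for the flopping contraction $Y_1 \to Z$ through which $\alpha$ factors locally) containing a curve $C'$ numerically proportional to $C$ with $K_{\mathcal{F}_1}\cdot C' = 0$ but $L \cdot C' < 0$ on the $Y_1$ side and $> 0$ on the $Y_2$ side. If $C$ is an lc center, then $C$ lies in the non-F-dlt locus in a controlled way, and I would invoke the fact (from \cite{cork1MMP}) that flopping contractions for F-dlt foliations cannot contract curves through lc centers — or more precisely, that the cone theorem/contraction theorem for F-dlt pairs produces contractions whose exceptional loci avoid lc centers, analogous to the classical statement that $(K_X+\Delta)$-flopping contractions of dlt pairs do not contract curves in $\lfloor\Delta\rfloor$. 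Concretely, I would show that if $C$ were an lc center, then for the F-dlt pair $(\mathcal{F}_1, lL)$ with $l$ small the divisor $K_{\mathcal{F}_1} + lL$ would be strictly negative on $C$, so $C$ would be contracted by a $(K_{\mathcal{F}_1}+lL)$-flip; but the adjunction/inversion-of-adjunction machinery for F-dlt foliations (the connectedness-type results) forbids a flipping curve from passing through an lc center while the pair stays lc on the other side. Running this against the crepancy of $\alpha$ for $K_{\mathcal{F}_1}$ yields the contradiction.

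The hard part will be making precise the claim that an lc center of an F-dlt corank-1 foliation cannot be contained in the flopping locus — this requires either an explicit local classification of F-dlt singularities along a flopped curve (using simple singularities and the normal-crossings hypothesis from the foliated-log-smooth definition), or a connectedness principle for foliated log canonical centers that I would need to extract from \cite{cork1MMP}. If such a connectedness statement is not directly available, the fallback is to work on the resolution $W$: take the divisor $E$ over $C$ with $a(E,\mathcal{F}_1) = -\epsilon(E)$, observe via \eqref{eq1} that $a(E,\mathcal{F}_2) = -\epsilon(E)$ as well, hence the center of $E$ on $Y_2$ is also an lc center, and then compare the two centers — one is a curve flopped by $\alpha$, and tracking $\epsilon(E)$ (invariance) through the birational map, combined with the non-dicritical hypothesis and Proposition \ref{KFtriviality}, should force the center on $Y_2$ to be a curve as well with $K_{\mathcal{F}_2}\cdot(\text{center}) = 0$, contradicting that $K_{\mathcal{F}_2} + lL'$ is ample (so positive on every curve) once we recall $K_{\mathcal{F}_2}\cdot C'' = 0$ would give $lL' \cdot C'' > 0$ yet $C'' \subseteq \mathrm{Exc}(\alpha^{-1})$ is impossible since $\alpha$ going the other way would contract it with the wrong sign.
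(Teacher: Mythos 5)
Your proposal has a genuine gap at its center. The step you yourself flag as "the hard part" --- that an lc center of an F-dlt corank-1 foliation cannot lie in the flopping locus, or that foliated flopping contractions avoid lc centers --- is not a result you can simply cite from \cite{cork1MMP}, and without it your main line of argument is circular: you are essentially assuming a statement equivalent to the lemma. Your fallback also does not close. Transporting the divisor $E$ with $a(E,\mathcal{F}_1)=-\epsilon(E)$ across the crepant small map $\alpha$ shows at best that its center on $Y_2$ is again an lc center, a curve $C''$ with $K_{\mathcal{F}_2}\cdot C''=0$ and $L'\cdot C''>0$. None of this is contradictory: $Y_2$ is merely a minimal model, it is perfectly allowed to carry $K_{\mathcal{F}_2}$-trivial curves that are lc centers, and $\mathrm{Exc}(\alpha^{-1})$ is a nonempty one-dimensional set (since $\alpha$ is small and not an isomorphism), so "$C''\subseteq\mathrm{Exc}(\alpha^{-1})$ is impossible" is simply false. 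The ampleness of $K_{\mathcal{F}_2}+\epsilon L'$ gives positivity on $C''$, which is consistent with everything above.

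The missing idea is to compare with the common source $X$ rather than with $Y_2$. The paper takes the divisor $E$ on the resolution $W$ realizing the lc center and splits into two cases according to its center on $X$. If $E$ descends to a divisor $E_X$ on $X$ contracted by $\alpha_1$, then because $\alpha_1$ is a composition of $K_{\mathcal{F}}$-negative MMP steps, $E_X$ appears in $E_1$ with strictly positive coefficient in $p^{*}K_{\mathcal{F}}=q_1^{*}K_{\mathcal{F}_1}+E_1$, i.e.\ its discrepancy over $Y_1$ is strictly positive; this contradicts $a(E,\mathcal{F}_1)=-\epsilon(E)\le 0$. If instead $\mathrm{center}_X(E)$ is small, then $C$ is a flipped curve of the run $\alpha_2$, and the local nature of flips shows the same flip could still be performed starting from $Y_1$, contradicting that $Y_1$ is already minimal. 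Neither of these two mechanisms --- positivity of discrepancies extracted by a negative MMP, and minimality of $Y_1$ --- appears in your proposal, and they are what actually carries the proof.
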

\begin{proof} \label{not lcc}
Assume for sake of contradiction there was a curve $C \subseteq \mathrm{Exc}(\alpha)$ whose generic point was a log canonical foliation singularity. By [\cite{HDFM}, Definition 2.24], passing to a foliated log resolution $W$, there is a divisor $E$ such that $a(E,\mathcal{F}_1)=-\epsilon(E)$. Notice that the $K_\mathcal{F}$-negativity of the map $\alpha_1$ gives rise to the following equation:
\begin{equation}\label{negativity}
p^{*}K_{\mathcal{F}}= q_{1}^{*}K_{\mathcal{F}_1} + E_1
\end{equation}
such that $p_{*}^{-1}\mathrm{Exc}(\alpha_1)\subset\mathrm{Supp}E_1$ and $E_1\geq0$.\par
First notice that as $C \subset \mathrm{Exc}(\alpha)$ we know that any exceptional divisor on the log resolution $W$ is either $\alpha_1$ or $\alpha_2$-exceptional. \par
 We can distinguish two scenarios. Either $E$ descends to $X$ as a divisor, or $center_X(E)$ is contained in the flipping locus of some flip arising in the decomposition of $\alpha_2$ into flips and divisorial contractions.\par
In the first case, $E$ descends to a divisor $E_X$ on $X$ with the property that $\alpha_1(E_X)=C$ and which is of discrepancy $a(E,F_1,\Delta_1)=-\epsilon(E_X)$ as the restriction of $p$ to $E$ in this case is an isomorphism and hence preserves discrepancies.\par
On the other hand, from equation (\ref{negativity}) we infer that $p_{*}^{-1}(E_X)=E \subset E_1$. This is however impossible as $a(E,\mathcal{F}_1)=-\epsilon(E)\leq 0$ whereas $E_1>0$ by construction.\par
Consequently, we may assume that $D:=center_X(E)$ is not divisorial. But then $C$ is a flipped curve for the map $\alpha_2$ (as $V$ in this case is an isomorphism by and hence cannot be a log canonical center by (\cite{cork1MMP}, Lemma 2.7)). The preceding comment in brackets though suggests that $X$ and $X_1$ are locally isomorpic around the curve $C$, which implies together with the local nature of flips that the flip of $D$ in the course of the MMP defined by $\alpha_2$ could also be carried out on $X_1$ contradicting the fact that $X_1$ is minimal. Hence the claim. 
\end{proof}
\subsection{The foliation is terminal along the curve}
\begin{lemma}\label{tangency}
A curve $C$ in $\mathrm{Exc}(\alpha)$ satisfying $K_{\mathcal{F}}\cdot C=0$ is tangent to the foliation $\mathcal{F}$.
\end{lemma}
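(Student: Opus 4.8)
The plan is to argue by contradiction, so suppose that $C$ is transverse to $\mathcal F$. Since $C\subseteq\mathrm{Exc}(\alpha)$ and $K_{\mathcal F_1}\cdot C=0$ by Proposition \ref{KFtriviality}, the birational map $\alpha_1$ is an isomorphism near the generic point of $C$: an $\alpha_1$-exceptional divisor dominating $C$ would appear in $\mathrm{Supp}\,E_1$ with strictly positive coefficient, which is incompatible with $K_{\mathcal F_1}$-triviality of $C$, exactly as in the discrepancy analysis in the proof of Lemma \ref{C not an lcc}. Hence $C$ is transverse to $\mathcal F_1$ as well. Unwinding the definition of transversality, and using that $\mathcal F_1$ is non-dicritical (a consequence of F-dlt-ness), I would pass to a foliated log resolution $\pi\colon\widehat Y\to Y_1$ of $(\mathcal F_1,L)$ that extracts a prime divisor $E$ with $\pi(E)=C$ which is \emph{not} $\widehat{\mathcal F}$-invariant, where $\widehat{\mathcal F}:=\pi^{-1}\mathcal F_1$. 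Lemma \ref{C not an lcc} now enters: since $C$ is not a log canonical centre of $\mathcal F_1$, no divisor over $Y_1$ with centre $C$ can have discrepancy $-\epsilon$, so $(\mathcal F_1)$ being log canonical forces $a(E,\mathcal F_1)>0$.

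The next step is foliated adjunction along $E$. Because $E$ is non-invariant, $\widehat{\mathcal F}$ restricts to a rank-one foliation $\mathcal F_E$ on the surface $E$ with $(K_{\widehat{\mathcal F}}+E)|_E=K_{\mathcal F_E}+\mathrm{Diff}$ for an effective different. The decisive geometric observation is that the transversality of $C$ to $\mathcal F_1$ makes the fibres of the induced morphism $\sigma\colon E\to C$ \emph{tangent} to $\mathcal F_E$ — moving along $C$ means crossing leaves of $\mathcal F_1$, so each $\sigma$-fibre is swallowed by a single leaf of $\mathcal F_E$. Reducing to the case that a general fibre $\Gamma$ of $\sigma$ is rational (legitimate since $E$ is contracted onto the curve $C$ by a resolution of a threefold), the derivative map $\mathcal T_\Gamma\hookrightarrow\mathcal F_E|_\Gamma$ forces $K_{\mathcal F_E}\cdot\Gamma\le -2$. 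On the other hand, the crepancy relation $K_{\widehat{\mathcal F}}=\pi^{*}K_{\mathcal F_1}+\sum a(E_i,\mathcal F_1)E_i$, the vanishing $\pi^{*}K_{\mathcal F_1}\cdot\Gamma=0$ (as $\Gamma$ is $\pi$-contracted), the bounds $a(E_i,\mathcal F_1)\ge-\epsilon(E_i)$ and $a(E,\mathcal F_1)>0$, and the negativity lemma applied to $E\cdot\Gamma$ express $(K_{\widehat{\mathcal F}}+E)\cdot\Gamma$, hence $K_{\mathcal F_E}\cdot\Gamma$, purely in terms of discrepancies and self-intersection numbers on $\widehat Y$. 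The contradiction is to come from confronting these two descriptions of $K_{\mathcal F_E}\cdot\Gamma$ with the triviality $K_{\mathcal F_1}\cdot C=0$.

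I expect the real difficulty to be precisely this last confrontation: in the regular, generically transverse situation the adjunction and discrepancy bookkeeping is already \emph{consistent} — it yields equalities rather than a contradiction — so one must genuinely invoke the bigness of $K_{\mathcal F_1}$ (this is exactly what makes Lemmata \ref{tri} and \ref{triv2}, i.e.\ the Nakamaye--Birkar description of the augmented base locus, applicable) together with the smallness of $\alpha$ to rule out the transverse configuration. Concretely, I would replace the ad hoc resolution above by the semiample fibration $h\colon Y_1\to Z$ associated with the big and nef divisor $K_{\mathcal F_1}$, note that $h$ contracts $C$, arrange that $E$ lies inside a resolution of a fibre of $h$, and deduce that a fibre of such a $K_{\mathcal F_1}$-trivial contraction of an F-dlt corank one foliation must be tangent; the simple-singularities and normal-crossings clauses packaged into foliated log smoothness are what keep the derivative-map estimate on $\Gamma$ uniform throughout this argument. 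If instead transversality persisted, the $\sigma$-fibres would propagate into a divisorial component of $\mathrm{\textbf{B}}_+(K_{\mathcal F_1}+\epsilon L)=\mathrm{Exc}(\alpha)$, contradicting that $\alpha$ is small.
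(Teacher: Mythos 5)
There is a genuine gap here, and you half-admit it yourself: after setting up the transverse configuration, the non-invariant divisor $E$ over $C$, the restricted foliation $\mathcal F_E$, and the adjunction/discrepancy bookkeeping along the fibres of $\sigma\colon E\to C$, you concede that this bookkeeping ``yields equalities rather than a contradiction,'' and the replacement you then offer --- that ``a fibre of such a $K_{\mathcal F_1}$-trivial contraction of an F-dlt corank one foliation must be tangent'' --- is essentially the statement to be proved, not a step towards it. The closing claim that transversality would force the $\sigma$-fibres to propagate into a divisorial component of $\mathbf{B}_+(K_{\mathcal F_1}+\epsilon L)$ is unjustified: $E$ lives on a resolution and is contracted onto the one-dimensional $C$, so nothing about its fibres produces a divisorial component of the augmented base locus on $Y_1$. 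In short, you have named the right ingredients (bigness must enter, Lemma \ref{C not an lcc} must enter) but never assembled them into an argument that terminates.

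The missing idea is how bigness is actually used. Write $K_{\mathcal F_1}\sim_{\mathbb Q}A+E$ with $A$ ample and $E\ge 0$; since $K_{\mathcal F_1}\cdot C=0$, ampleness of $A$ forces $E\cdot C<0$, so some component $L$ of $E$ satisfies $L\cdot C<0$ and hence $C\subset\mathrm{Supp}(L)$. Because $C$ is not a log canonical centre of $\mathcal F_1$ (Lemma \ref{C not an lcc}), one may raise the coefficient of $L$ to the log canonical threshold: there is $\epsilon>0$ such that $C$ becomes a log canonical centre of the pair $(\mathcal F_1,\epsilon L)$ while still $(K_{\mathcal F_1}+\epsilon L)\cdot C<0$. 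A single citation ([\cite{HDFM}, Thm.\ 4.5]) --- a log canonical centre against which the log canonical divisor of the pair is negative must be tangent to the foliation --- then finishes the proof. This log-canonical-threshold trick, reducing tangency to a known structural result about negative lc centres, is the entire content of the lemma; without it, or some substitute for that final cited theorem, your contradiction never materialises.
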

\begin{proof}
As $K_{\mathcal{F}}$ is big there exist an ample divisor $A$ and an effective divisor E such that $K_{\mathcal{F}} \sim_{Q} A+E$. By assumption we furthermore have $0=K_{\mathcal{F}}\cdot C=(A+E)\cdot C$, which implies that $E\cdot C<0$ due to ampleness of $A$. Let $L$ be a component of E such that $L\cdot C<0$, i.e. $C$ is contained in $\mathrm{Supp}(L)$. By the lemma (\ref{C not an lcc}) we know that $C$ is not an lcc for $\mathcal{F}$ and hence can find $\epsilon>0$ such that $C$ defines an lcc for $\mathcal{F}+ \epsilon L$. Notice that by construction we have $(K_{\mathcal{F}}+ \epsilon L)\cdot C<0$.
We can now invoke [\cite{HDFM},Thm 4.5] to conclude that $C$ needs to be tangent as claimed.
\end{proof}
Lemma (\ref{C not an lcc}) proved that $C$ itself is not a log canonical center of the foliation $\mathcal{F}$, we next want to show that in addition, there are no 0-dimensional log canonical centers located along $C$. Together with the above demonstrated tangency of $C$ this will allow us to conclude that the foliation $\mathcal{F}$ is terminal along $C$.\par
\begin{lemma}
 Let $C$ be a tangent curve contained in $\mathrm{Exc}(\alpha)$, then $C$ does not contain any lc center of $\mathcal{F}$.
\end{lemma}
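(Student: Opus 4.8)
The plan is to argue by contradiction, leveraging the previous lemma which tells us that $C$ is tangent to $\mathcal{F}$. Suppose there exists a closed point $p \in C$ which is a $0$-dimensional log canonical center of $\mathcal{F}_1$. Passing to a foliated log resolution $\sigma: W \to Y_1$ of $\mathcal{F}_1$, there is an exceptional divisor $E$ with $\mathrm{center}_{Y_1}(E) = p$ and discrepancy $a(E, \mathcal{F}_1) = -\epsilon(E)$. Since $C$ is tangent to $\mathcal{F}_1$ (hence $\mathcal{F}_1$ is non-dicritical along $C$ — otherwise $C$ would fail to be tangent), any divisor dominating the point $p$ lying on $C$ must be $\mathcal{F}_1$-invariant on $W$, so $\epsilon(E) = -1$ and therefore $a(E, \mathcal{F}_1) = 1$, i.e. $E$ is a log canonical place with invariant exceptional divisor. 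The key point I would then extract is that the tangency of $C$ forces all the relevant exceptional divisors over a neighbourhood of $C$ to be invariant, so the obstruction is genuinely \emph{foliated} rather than coming from the underlying variety.

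Next I would transfer the situation to the common log resolution $W$ from diagram (\ref{setup}) and use Proposition \ref{KFtriviality}, which gives $K_{\mathcal{F}_1} \cdot C = 0$ for every curve $C \subseteq \mathrm{Exc}(\alpha)$. As in the proof of Lemma \ref{tangency}, write $K_{\mathcal{F}_1} \sim_{\mathbb{Q}} A + G$ with $A$ ample and $G \geq 0$ effective; then $G \cdot C < 0$, so $C$ lies in the support of some component $L_0$ of $G$. Because $C$ is tangent and $p \in C$ is a $0$-dimensional lc center, I would run a perturbation argument parallel to Lemma \ref{tangency}: for a suitable $\epsilon > 0$ the pair $(\mathcal{F}_1, \epsilon L_0)$ still has $p$ as an lc center with $(K_{\mathcal{F}_1} + \epsilon L_0) \cdot C < 0$, and I would then invoke the adjunction/tangency dichotomy of [\cite{HDFM}, Thm 4.5] again — this time using that a $0$-dimensional lc center sitting on a tangent curve with negative intersection against a negative-on-$C$ divisor forces a contradiction with the classification of simple foliation singularities, since an F-dlt foliation cannot have an isolated invariant lc center of that type lying on a curve along which $K_{\mathcal{F}_1}$ is trivial.

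Alternatively, and more in the spirit of Lemma \ref{C not an lcc}, I would show directly that such a point $p$ cannot arise: since $\alpha$ is small (first Lemma), $Y_1$ and $Y_2$ agree outside codimension $2$, and the lc center $p$ must be the center of a divisor $E$ which is either extracted by $\alpha_1$ or $\alpha_2$ in the factorizations through $X$; in the first case $E \subseteq \mathrm{Supp}\, E_1$ by equation (\ref{negativity}), contradicting $a(E,\mathcal{F}_1) \le 0 < E_1$; in the second case $p$ lies in a flipping locus of $\alpha_2$ and the local isomorphism $X \cong X_1$ around $C$ (established in Lemma \ref{C not an lcc}) would let that flip be performed on $X_1$, contradicting minimality of $(Y_1, \mathcal{F}_1)$. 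Combining this with the earlier lemma that $C$ itself is not an lc center, we conclude that $\mathcal{F}_1$ is terminal along $C$.

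I expect the main obstacle to be the first approach's reliance on controlling the behaviour of discrepancies of $0$-dimensional lc centers under the tangency hypothesis — specifically, verifying that tangency of $C$ genuinely propagates to \emph{all} exceptional divisors centered at points of $C$ (not merely those dominating $C$), which is what licenses the reduction to invariant lc places and the application of [\cite{HDFM}, Thm 4.5]. The cleanest route is likely the second one, which sidesteps the local foliated singularity analysis entirely and recycles the small-map and minimality arguments already in place; I would make that the primary line of proof and relegate the perturbation argument to a remark.
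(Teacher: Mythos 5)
There is a genuine gap, and it lies in the case your argument cannot see: a $0$-dimensional log canonical center of an F-dlt corank $1$ foliation is a codimension-$3$ stratum of $\mathrm{Sing}(\mathcal{F}_1)$, i.e.\ a point where several one-dimensional singular strata of a simple singularity cross. Such a point can be \emph{intrinsic} to the foliation --- already present on $X$ and untouched by either MMP --- in which case the lc place $E$ over $p$ has the same discrepancy $-\epsilon(E)$ computed over $X$ and over $Y_1$, appears with coefficient $0$ in $E_1$, and its center on $X$ is neither a divisor nor contained in any flipping locus. Your second (preferred) route rests on the dichotomy "extracted by $\alpha_1$ or lying in a flipping locus of $\alpha_2$", which is not exhaustive precisely because of this case; the smallness of $\alpha$ and the minimality of $Y_1$ simply do not detect lc centers that the F-dlt condition permits. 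Your first route begs the question at the decisive step: the assertion that "an F-dlt foliation cannot have an isolated invariant lc center of that type lying on a curve along which $K_{\mathcal{F}_1}$ is trivial" \emph{is} the lemma, and [\cite{HDFM}, Thm 4.5] (a statement about transverse lc centers and negative extremal rays) gives no purchase on a tangent curve with $K_{\mathcal{F}_1}\cdot C=0$.

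What actually excludes these intrinsic centers in the paper is a numerical and deformation-theoretic argument that is absent from your proposal. Tangency of $C$ yields an invariant surface germ $S\supset C$ by [\cite{HDFM}, Thm 5.11]; foliated log smoothness near $p$ (via [\cite{cork1MMP}, Thm 3.8]) shows that a $0$-dimensional lc center forces at least two one-dimensional strata $\xi_1,\xi_2$ of $\mathrm{Sing}(\mathcal{F}_1)$ through $p$; adjunction along $S$ together with $K_{\mathcal{F}_1}\cdot C=0$ gives $0=(K_S+\sum\xi_i)\cdot C$, hence $-K_S\cdot C\geq 2$; after checking $g(C)=0$ (via the normal bundle of $C$ being non-positive because $C$ is rigid), the estimate $\dim_{[f]}\mathrm{Mor}(C,S)\geq -K_S\cdot C+2\geq 4>\dim\mathrm{Aut}(C)$ forces $C$ to deform in $S$, contradicting $\dim\mathrm{Exc}(\alpha)=1$. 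If you want to complete your writeup, you need to supply this rigidity argument (or an equivalent one); recycling Lemma \ref{C not an lcc} and the perturbation trick from Lemma \ref{tangency} is not enough.
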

\begin{proof}
As $C$ is tangent by assumption, we can invoke (\cite{HDFM},  Thm 5.11) to deduce that there is a germ of an analytic surface $S$ such that $C \subset S$ and S is foliation invariant.\par 
For sake of contradiction we next assume that there is a 0-dimensional log canonical center located along $C$. By [\cite{cork1MMP},thm 3.8] we can deduce that $\mathcal{F}$ is foliated log smooth around $p$.\par 
Notice that in this situation all log canonical centers  are precisely given by the strata of Sing($\mathcal{F}$) around the simple singularity of the foliation. \par
Furthermore, the local description of simple singularities ( for definition see [\cite{cork1MMP}, Def. 2.8] in order for $p$ to be a log canonical center, we must have at least two one dimensional strata of Sing($\mathcal{F}$), corresponding to the intersection lines of two formal coordinate hyperplane germs with the surface $S$, intersecting $C$ in the point $p$. The scenario is depicted in the picture (\ref{2lcc}) below where we labelled the 2 presumptive 1-dimensional log canonical centers meeting $C$ by $\xi_1$ and $\xi_2$. We next argue that the depicted picture cannot occur.\par 
According to Lemma \ref{tri} we know that we can assume $K_\mathcal{F} \cdot C=0$. Thus in the described scenario, the following calculation would hold true:
\begin{equation} \label{xi}
0= K_\mathcal{F}\cdot C= (K_S + \sum(\xi_i))\cdot C
\end{equation}
where the sum runs over strata of Sing($\mathcal{F}$) intersecting in $C$ ( in (\ref{2lcc}) we only drew 2 of them for the sake of clarity). In case that there are at least two we conclude that $-K_S \cdot C \geq 2$. As the following reasoning displays, this would however imply that $C$ deforms which is impossible as $C$ by assumption is contained in the one dimensional $\mathrm{Exc}(\alpha)$. In order to get the contradiction, we need to assume that $C$ is rational, an assumption which will be justified right after.\par
Here we regard $S$ as an analytic surface. And for simplicity we assume that $S$ is smooth here. The case when $S$ is singular will be discussed later.\par
According to standard deformation theory, an estimate on the dimension of the deformation space of a morphism $f$ from a curve $C$ to a surface $S$ is given as follows:
\begin{equation}\label{deform}
\dim_{[f]}\mathrm{Mor}(C,S)\geq \chi(C,f^{*}T_S) = -K_{S}.f^{*}C +2(1-g(C))
\end{equation}
where the equality follows from Hirzebruch-Riemann-Roch \cite{Debarre}.\par
Assuming thus for the moment that $g(C)=0$, our previous calculations show that $$\mathrm{dim}_{[f]}\mathrm{Mor}(C,S)\geq 4$$ 
Since $\dim\mathrm{Aut}(C)\leq3$, $C$ would deform, a contradiction.\par
Now we turn back to the general case when $S$ is singular. In this case we take a minimal resolution $\varphi:S^\prime\to S$. Since $S$ is not smooth hence not terminal we have
$$K_{S^\prime}+E=\varphi^*K_S$$
for some effective exceptional divisor $E$ on $S^\prime$. According to the simple singularity condition we have that $S$ is smooth at the generic point of $C$. So if we take $C^\prime=\varphi_*^{-1}C$ we have
$$K_{S^\prime}\cdot C^\prime\leq K_S\cdot C\leq-2$$
Then the argument (\ref{deform}) still works.\par
We are then left to justify the assumption that $C$ is of arithmetic genus 0.\par
In order to verify this claim it suffices to show that its normalization is a smooth rational curve. Using  equation (\ref{xi}) and taking advantage of the projection formula as well as generalized adjunction results (see [\cite{KollarSing}, \text{chapter 4}]) we obtain the following sequence of equalities
\begin{equation}
\deg(\nu)(K_\mathcal{F} -\sum(\xi_i))|_C + \tilde{C}^2 =K_S.
\cdot\tilde{C}+\tilde{C}^2 \sim_{\mathcal{Q}} K_{\tilde{C}}+ \mathrm{Diff}_{\tilde{C}}
\end{equation} 
where $\nu:\tilde{C} \mapsto C$ denotes the normalization and $\mathrm{Diff}_{\tilde{C}}$ the different of the adjunction.\par
Now notice that the normal bundle of $\tilde{C}$, given by $\tilde{C}^2$ has non-positive degree as $C$ does not deform. Thus, the whole left hand side of the $\mathbb{Q}$-linear equivalence above is of non-positive degree and in turn so is the right hand side. Noticing that $\mathrm{Diff}_{\tilde{C}}$ is effective, we conclude that $K_{\tilde{C}}$ has non-positive degree which implies that $\tilde{C}$ and hence $C$ are genus 0 curves.\par
Summing up all the gained insight about the local situation around the presumptive log canonical centre $p$ located on $C$, the only conceivable picture could be the one depicted in (\ref{lcc1}).
However, as the log canonical locus of the foliation is given precisely by strata of Sing($\mathcal{F}$) and codimensionality of a simple singularity is given by the number of maximal dimensional strata it is contained in, $p$ being 0-dimensional cannot be a log canonical center of $\mathcal{F}$.\par
 
\begin{figure}
\centering
\begin{tikzpicture}
\draw (0,0)rectangle (4,4) node[anchor=north west] {$S$};
\draw (0,1)..controls (2,1) and (3,1.5)..(4,2) node[anchor=north west] {$C$};
\draw (2.25,0)..controls (2,1) and (2.5,3)..(3,4) node[anchor=north] {$\xi_1$};
\draw (3,0)..controls (2,1) and (1.5,3)..(2.25,4) node [anchor=north] {$\xi_2$};

\end{tikzpicture}
\caption{No more than 2 log canonical centers $\xi_1$ and $\xi_2$ meeting $C$ in a common point $p$}
\label{2lcc}
\end{figure}
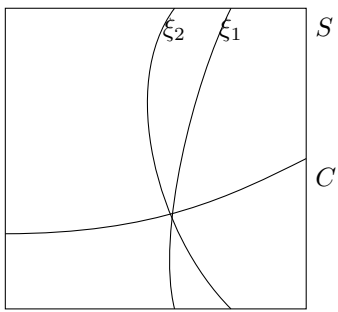

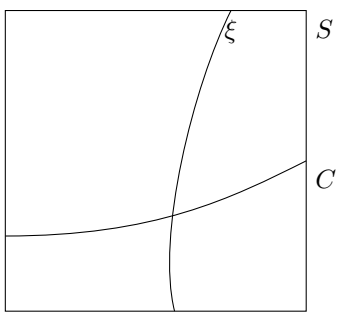
\begin{figure}
\centering
\begin{tikzpicture}
\draw (0,0)rectangle (4,4) node[anchor=north west] {$S$};
\draw (0,1)..controls (2,1) and (3,1.5)..(4,2) node[anchor=north west] {$C$};
\draw (2.25,0)..controls (2,1) and (2.5,3)..(3,4) node[anchor=north] {$\xi$};
\end{tikzpicture}
\caption{1 lcc meeting $C$ in a common point $p$}
\label{lcc1}
\end{figure}
\end{proof}
\begin{proof}[proof of Theorem \ref{theorem}]
Altogether, we have demonstrated that $\mathcal{F}$ is terminal along any curve $C\subset\mathrm{Exc}(\alpha)$. We claim that this now allows us to find $l > 0$ and an ample divisor $L^\prime$ on $Y_2$ satisfying the conditions in theorem (\ref{Bertini}) such that the strict transform $ L:= \alpha_{*}^{-1}L^\prime$ fulfills the desired condition that $(Y_1,\mathcal{F}_1,l L)$ is a F-dlt pair.\par
Herefore notice, that the proof of (\ref{Bertini}) in [\cite{cork1MMP}, Lemma 3.24] essentially boils down to the demonstration of bullet points $(i)-(iv)$ in the notation of the original publication. This in turn implies that the theorem can be applied to L directly in case one is able to establish these 4 conditions. We now shortly comment on them individually justifying our claim : \par
Notice first that outside the exceptional locus $\mathrm{Exc}(\alpha)$, $L$ is isomorphic to $L^\prime$ and the conditions are hence satisfied automatically. Furthermore, there are no log canonical centers located on $\mathrm{Exc}(\alpha)$, so (i) is established. Point (ii) is automatically satisfied as $\dim\mathrm{Exc}(\alpha)=1$. For (iii), again, there are no log canonical centers located on $\mathrm{Exc}(\alpha)$. Lastly, ad (iv) notice that as $\mathcal{F}$ is terminal along $ C\subset\mathrm{Exc}(\alpha)$ as we have demonstrated, we can find a real number l small enough such that any exceptional divisor E with center contained in $\mathrm{Exc}(\alpha)$ will satisfy $a(E,\mathcal{F},lL) \geq \epsilon(E)$. 
We thus conclude by theorem (\ref{bpf}) that the F-dlt pair $(\mathcal{F},lL)$ would be semiample in case it was nef. By the same logic as in the original publication by Kawamata this would however force the small map $\alpha$ to be an isomorphism and hence this case can be disregarded.\par
We can thus assume in the sequel that $K_{\mathcal{F}_1} +lL$ as constructed above is not nef and run a $(K_{\mathcal{F}_1} + lL)$-MMP in complete analogy to Kawamata. Where it is to be stressed that by results of the authors of \cite{cork1MMP}, the F-dlt property is preserved under a $K_{\mathcal{F}}$-MMP.
It is also worth mentioning that this $(K_{\mathcal{F}_1} + lL)$-MMP which by construction consists of a sequence of flips is actually $K_{\mathcal{F}_1}$-trivial as has been demonstrated above and need not, as opposed to the classical case in Kawamata's publication, be deduced from a clever application of the cone theorem.\par
Last, according to Proposition \ref{KFtriviality}, every flip $\beta_i: X_{i-1}\dashrightarrow X_{i}$ is a $K_{\mathcal{G}_{i-1}}$-flop since every curve we contract is $K_{\mathcal{G}_{i-1}}$-trivial. This concludes the proof of theorem \ref{theorem}.
\end{proof}
\bibliographystyle{plain} 
\bibliography{ref}
\end{document}